\newtheorem{thm}{Theorem}[section] 
\newtheorem{lem}[thm]{Lemma} 
\newtheorem{cor}[thm]{Corollary} 
\newtheorem{prop}[thm]{Proposition}
\theoremstyle{definition} 
\newtheorem{rem}[thm]{Remark} 
\theoremstyle{remark}
\theoremstyle{definition}
\def\O{\Omega}
\def\e{\epsilon}
\def\S{\Sigma} 
\def\n{\nabla}
\def\P{\partial}
\def\p{\partial}
\def\a{\alpha}
\def\n{\nabla}
\def\O{\Omega}
\def\p{\partial}
\def\e{\epsilon}
\def\a{\alpha}
\def\n{\nabla}
\def\<{\langle}
\def\>{\rangle}
\def\n{\nabla}
\def\RR{\mathbb{R}}
\def\SS{\mathbb{S}}
\def\BB{\mathbb{B}}
\def\O{\Omega}
\def\p{\partial}
\def\e{\epsilon}
\def\a{\alpha}
\def\R{\mathbb{R}}
\def\C{\mathcal{C}}
\patchcmd{\abstract}{\scshape\abstractname}{\textbf{\abstractname}}{}{}
\def\@makefnmark{} 
\def\riq{relative isoperimetric inequality } \def\riqq {relative isoperimetric inequality}
  \def\iqq{isoperimetric inequality}
\numberwithin{equation}{section}
\numberwithin{exa}{section}
 \numberwithin{defn}{section}
\begin{document}
\title[The relative isoperimetric inequality for minimal submanifolds]{The relative isoperimetric inequality for minimal submanifolds with free boundary in the Euclidean space}
\author{Lei Liu, Guofang Wang and Liangjun Weng}
\address{School of Mathematics and Statistics \& Hubei Key Laboratory of Mathematical Sciences, Central China Normal University, Wuhan, 430079, P.R. China}
\email{leiliu2020@ccnu.edu.cn}

\address{{Mathematisches Institut, Albert-Ludwigs-Universit\"{a}t Freiburg, Freiburg im Breisgau, 79104, Germany}}
\email{guofang.wang@math.uni-freiburg.de}

\address{School of Mathematical Sciences, Anhui University, Hefei, 230601, P. R. China}

\email{ljweng08@mail.ustc.edu.cn}

\subjclass[2020]{Primary: 53A10 Secondary: 53C40
	53A07, 35A23}

\keywords{ Relative isoperimetric inequality, Michael-Simon and  Allard inequality,  ABP method, free boundary}

\maketitle
\begin{abstract}
In this paper, 
we mainly consider the relative isoperimetric inequalities for minimal submanifolds with free boundary. 
We first generalize ideas of restricted normal cones introduced by  Choe-Ghomi-Ritor\'e in
\cite{CGR06} and obtain an optimal  area estimate for generalized restricted normal cones.  
This area estimate, together with the ABP method of  Cabr\'e in \cite{Cabre2008},  provides a new  proof of the 
relative isoperimetric inequality obtained by Choe-Ghomi-Ritor\'e in \cite{CGR07}.
Furthermore, we  use  this estimate and the idea of Brendle in his recent work \cite{Brendle2019} to obtain a relative isoperimetric inequality for minimal submanifolds with free boundary on a convex support surface in $\mathbb{R}^{n+m}$, which is optimal 
and gives an affirmative answer to an open problem proposed by Choe in \cite{Choe2005}, Open Problem 12.6,  when the 
codimension $m\le 2$. 
\end{abstract}

\section{Introduction} 

Recently problems related the hypersurfaces with free boundary on a given support surface attract more and more mathematicians. A nice example is the work of Fraser-Schoen on  the first Steklov eigenvalues and minimal free boundary surfaces
\cite{FS1, FS2}, which opens many interesting questions. There have been a lot of results on free boundary hypersurfaces. Here we just mention two further classes of results. One is work of Li and Zhou and their colleagues on the Min-max theory for free boundary minimal hypersurfaces \cite{LZ1, LZ2, LZ3};
	another is the mean curvature flows for free boundary hypersurfaces \cite{Stahl1, Stahl2, LS, SWX}.
	The aim of this paper is to establish the optimal relative isoperimetric inequalities for minimal submanifolds with free boundary, which should be a powerful tool in the study of free boundary hypersurfaces.

We start with
the classical isoperimetric inequality, which states
\begin{equation}
 \label{eq_iso1}
 	\frac{|\partial \O|}{|\partial \mathbb{B}^n|}\geq  \bigg(\frac{|\O|}{|\BB^n|}\bigg)^{\frac{n-1}{n}},
 	\end{equation}
for a bounded domain $\O\subset \RR^n$ and equality holds if and only if $\O$ is  a ball. 
It  plays a fundamental role in mathematics.
Its origin goes back to antiquity, known as  Dido's problem. It is a longstanding open problem if \eqref{eq_iso1} holds for domains
in a minimal submanifold $M^n$ in the Euclidean space $\R^{n+m}$, which goes back at least to Carleman \cite{Car}, see also \cite{Hsiung}.  For domains in an area-minimizing $M$ in $\RR^{n+m}$,  Almgren solved this open problem affirmatively
 in \cite{A}. See also a proof for $n=2$ in\cite{W}.
There have been many results on this problem, especially when $M$ is a minimal surface. Here we just  
refer to a survey  of Choe \cite{Choe2005} and references therein. Till the recent work of  Brendle, this problem is widely open. In \cite{Brendle2019} Brendle
achieved a breakthrough on this problem  and proved
 \begin{thm}[\textbf{Brendle \cite{Brendle2019}}]\label{thm1.1}
 	Let $M\subset\mathbb{R}^{n+m}$  $(m\ge 1)$ be a compact $n$-dimensional {submanifold} with boundary $\p M$, then
 	\begin{equation}\label{eq_0}
 	\frac{|\p M|+\int_M|H|dv}{|\partial \mathbb{B}^n|}\geq b_{n,m} \left(\frac{\mbox{}|M|}{\mbox{}|\mathbb{B}^n|}\right)^{\frac{n-1}{n}},
 	\end{equation}
 	where $b_{n,m}$  is defined by 
 	\begin{equation}\label{eq_0.1}
 	 b_{n,m} =\begin{cases} \left(\frac{(n+m)|\mathbb{B}^{n+m}|}{m |\mathbb{B}^n||\mathbb{B}^m|}\right)^{\frac{1}{n}}, & \hbox{ if } m> 2, \\
 	           1, & \hbox{ if } m=1 \hbox{ or } 2,
 	          \end{cases}
 	\end{equation}
with equality for $m\le 2$ if and only if  $M$ is a  round ball. Here $H$ is the mean curvature of $M$, $|\p M|$ and $|M|$  are the area and the volume of $\p M$ and $M$ respectively.
 \end{thm}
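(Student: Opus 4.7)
My plan is to follow the ABP strategy that Cabr\'e used to reprove the classical isoperimetric inequality, adapted to higher codimension as suggested by the form of the statement. The starting point is the Neumann problem
\[
\Delta_M u = c \text{ on } M, \qquad \langle \nabla^M u, \nu\rangle = 1 \text{ on } \partial M,
\]
where $\nu$ is the outward unit conormal on $\partial M$. Integrating and applying the divergence theorem on $M$ forces $c = |\partial M|/|M|$, and standard elliptic theory provides a smooth solution $u$.

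The heart of the argument is a transport map from the open unit normal disk bundle into the ambient space,
\[
\Phi \colon \{(x,y) : x \in M,\; y \in T_x^\perp M,\; |y| < 1\} \longrightarrow \mathbb R^{n+m}, \qquad \Phi(x,y) = \nabla^M u(x) + y.
\]
I would first prove that $\mathbb{B}^{n+m} \subseteq \Phi(\Gamma)$, where $\Gamma$ is the ``contact set'' of pairs $(x_0, y_0)$ such that $x_0$ minimises $x \mapsto u(x) - \langle \xi, x\rangle$ on $M$ and $y_0 = \xi^\perp$, for some $\xi$ with $|\xi|<1$. The Neumann condition $\partial_\nu u = 1$ combined with $\langle \xi, \nu\rangle \leq |\xi| < 1$ rules out boundary minima, so the minimum is interior: there $\nabla^M u(x_0) = \xi^T$, the tangential Hessian $D^2_M u(x_0)$ is positive semidefinite, and $\Phi(x_0, y_0) = \xi$.

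At a contact point the Jacobian of $\Phi$ decomposes, in any adapted orthonormal frame, into the identity along the $m$ normal directions and, along the $n$ tangential directions, into the positive semidefinite symmetric matrix $D^2_M u - \langle h, y\rangle$, where $h$ is the second fundamental form of $M$. AM--GM then yields
\[
|\det J\Phi| \leq \Bigl(\tfrac{c - \langle \vec H(x), y\rangle}{n}\Bigr)^n.
\]
Applying the area formula to $\Phi|_{\Gamma}$ gives
\[
|\mathbb{B}^{n+m}| \leq n^{-n}\int_M \int_{|y|<1,\; y\perp T_x M}\bigl(c-\langle \vec H(x), y\rangle\bigr)_+^n \, dy\, dv(x).
\]
The linear term in $y$ integrates to zero over the disk by isotropy, and the higher spherical moments $\int_{|y|<1,\, y\in\mathbb R^m}|y|^k\,dy = \frac{m|\mathbb B^m|}{m+k}$ combine with the relations $c|M| = |\partial M|$ and the contribution of $\int_M |H|$ on the right-hand side. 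After rescaling the Neumann datum to match the dimensionally correct quantity $|\partial M| + \int_M |H|\, dv$, this should reproduce the asserted inequality with constant $b_{n,m}$.

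The step I expect to be the main obstacle is extracting the \emph{sharp} value of $b_{n,m}$, and in particular explaining why it collapses to $1$ when $m\leq 2$ but degrades when $m>2$. The crude bound $c - \langle \vec H, y\rangle \leq c + |H|\,|y|$ is not tight in general, so the sharp constant seems to require a finer one-dimensional integral inequality applied slicewise on each normal fiber, which converts the pointwise $|H|$-contribution into exactly $\int_M |H|\,dv$ with factor $1$ in low codimension but with an unavoidable loss in higher codimension. The same sharp inequality underpins the rigidity analysis: equality would force pointwise equality in AM--GM (hence umbilicity of $M$) together with pointwise equality in the normal-disk inequality, and tracking both conditions against the Neumann boundary data should ultimately compel $M$ to be a flat round ball when $m \leq 2$.
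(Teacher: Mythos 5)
Your skeleton is the right one: solve a Neumann problem on $M$, define the transport map $\Phi(x,y) = \nabla u(x) + y$ on a normal-bundle domain, show it covers $\mathbb{B}^{n+m}$ from a contact set via the boundary condition $\partial_\nu u = 1 > |\xi|$, and bound the Jacobian by AM--GM. That much matches Brendle's argument (which the paper adapts in its Section 3). But there are three interlocking gaps, and without all three you cannot reach the constant $b_{n,m}$.

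First, the source term. You pose $\Delta u = c$ with $c = |\partial M|/|M|$, a constant. Brendle instead normalizes $|\partial M| + \int_M |H|\,dv = n|M|$ and solves $\Delta u = n - |H|$, with $\partial_\nu u = 1$ on $\partial M$. This matters because at a contact point the trace of the matrix $\nabla^2 u - \langle \Pi, y\rangle$ becomes $n - |H| - \langle H, y\rangle$, which on $|y|<1$ is bounded by $n$; AM--GM then gives the \emph{clean, pointwise} bound $\det \mathrm{Jac}\,\Phi \le 1$ (the paper's Lemma \ref{lem3.1}). Your choice leaves a factor $(c - \langle H, y\rangle)^n$ whose expansion couples $|H|^k$ to moments of $y$ in a way that does not repackage as $\int_M |H|\,dv$.

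Second, the domain of integration. You integrate over the full normal disk $\{|y|<1\}$ at each $x$. The correct region is $\Omega := \{(x,y) : |\nabla u|^2(x) + |y|^2 < 1\}$, the $\Phi$-preimage of $\mathbb{B}^{n+m}$ once you notice $|\Phi(x,y)|^2 = |\nabla u(x)|^2 + |y|^2$. This makes the fiber integral $|\mathbb{B}^m|(1 - |\nabla u|^2)^{m/2}$, and the dependence on $|\nabla u|$ is exactly what the next step uses.

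Third and decisively, the thin-annulus slicing, which you flag as an obstacle but do not resolve. Brendle compares, for $0<t<1$, the $\Phi$-image of $\{t^2 < |\nabla u|^2 + |y|^2 < 1\}$ (which covers the annulus $\{t<|\xi|<1\}$, of volume $|\mathbb{B}^N|(1-t^N)$) against the fiberwise volume
\[
|\mathbb{B}^m|\int_U \bigl[(1-|\nabla u|^2)^{m/2} - (t^2-|\nabla u|^2)_+^{m/2}\bigr]\,dv
\;\le\; \frac{m}{2}(1-t^2)\,|\mathbb{B}^m|\,|M|,
\]
where the last step uses concavity of $s\mapsto s^{m/2}$, valid precisely for $m\ge 2$. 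Dividing by $(1-t)$ and letting $t\to 1^-$ gives $N|\mathbb{B}^N| \le m|\mathbb{B}^m|\,|M|$, hence $|M| \ge N|\mathbb{B}^N|/(m|\mathbb{B}^m|)$, which with the normalization of $\Delta u$ is exactly the claimed inequality with $b_{n,m}$. The case $m=1$ is handled by embedding into codimension $2$. Without this comparison, there is no mechanism in your proof to produce $b_{n,m}$, to explain its form, or to see why equality characterises the round ball for $m\le 2$; the AM--GM rigidity alone only shows $\nabla^2 u - \langle \Pi, y\rangle = g$ on the equality set, and one still needs $|U|=|M|$ and the thin-annulus equality to deduce $\Pi\equiv 0$ and $\nabla^2 u = g$ everywhere.
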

 Theorem \ref{thm1.1} 
implies that the above longstanding open  problem has an affirmative answer if the codimension is not bigger than $2$,
while gives an explicit constant $b_{n,m}$ for the general case.
Inequality \eqref{eq_0}
 implies a Michael-Simon and  Allard inequality  \cite{MS}, \cite{Allard}  with an optimal positive constant, at least in the case of codimension $m\le 2$. 
 See \cite{Brendle2019}. For previous work see  \cite{Castillon}. 

In this paper, we are interested in  the so-called the relative isoperimetric inequality for 
$n$-dimensional minimal submanifolds in $\mathbb{R}^{n+m}$.  Set $N:=n+m$.
Let $\mathcal{C}\subset \mathbb{R}^{N}$ be an open convex body in $\mathbb{R}^{N}$ with a smooth boundary
 $S=\p \C$.  Let $M \subset \RR^N$ be an $n$-dimensional submanifold with codimension $m=N-n$. Its boundary $\p M$
 consists of two smooth pieces $\Sigma$ and $\Gamma$, where $\Gamma\subset \p \C$ and $\Sigma 
 \subset 
 \R^n\backslash \C$. Denote their common boundary by $\p\Sigma$, which may be empty.
 Let $\nu$ be the outer unit normal vector field of $\p M \subset M$
 and $\nu_S$   the outer unit normal vector field of $S$. We assume that  $\nu=-\nu_S$ along $\Gamma$, i.e. $M$ intersects $S$ from outside orthogonally, and  call such a submanifold 
 a {\it partially free boundary submanifold} with free boundary $\Gamma$ and relative boundary (or fixed, or Dirichlet boundary) $\Sigma$. 
 When $\S=\emptyset$, we call $M$ a {\it  free boundary submanifold}.
  In some literature, $S$ is also called a support
 hypersurface. 
 The relative isoperimetric inequality concerns the relation between the area of the relative boundary $\Sigma$,  $| \Sigma|$, and the volume
 of $M$,  $|M|$.

We prove the following relative isoperimetric inequality for submanifolds in $\mathbb{R}^{n+m}$, which is a relative version of the result of  Brendle \cite{Brendle2019}.
 
 \begin{thm}\label{thm1.2}
 	Let $M^n\subset \R^{n+m}$ $(m\ge 1)$ be a partially free  boundary   submanifold 
with relative boundary  $\Sigma$ and free boundary $\Gamma$ on a convex support hypersurface $S$. We have
\begin{equation}\label{riq}
 	\frac{|\S|+\int_M|H|dv}{|\partial \mathbb{B}^n|}\geq 
 	\left(\frac 12 \right)^{\frac{1}{n}} b_{n,m}
 	\left(\frac{\mbox{}|M|}{\mbox{}|\mathbb{B}^n|}\right)^{\frac{n-1}{n}},
 	\end{equation}
 	where $b_{n,m}$ is defined in \eqref{eq_0.1} and $H$ is the mean curvature vector of $M$.
Moreover, when $m\le 2$,  equality holds if and only if 
 $M$ is a flat half $n$-ball.
 \end{thm}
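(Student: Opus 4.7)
The plan is to adapt Brendle's ABP-type argument for Theorem \ref{thm1.1} to the partially free boundary setting, using the restricted normal cone technique deployed earlier in the paper (after \cite{CGR06}) to encode the convex support hypersurface $S$. The factor $(1/2)^{1/n}$ in \eqref{riq} should reflect the fact that, in the presence of a convex obstacle meeting $M$ orthogonally along $\Gamma$, the ABP transport map will only cover a half-ball in $\mathbb{R}^{n+m}$ rather than a full one.

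Concretely, set $f := |\Sigma| + \int_M |H|\,dv$ and solve on $M$ a mixed boundary value problem of the form
\[
\Delta_M u = \tfrac{nf}{|M|} \text{ on } M, \qquad \langle \nabla^M u, \nu \rangle = 1 \text{ on } \Sigma, \qquad \langle \nabla^M u, \nu \rangle = 0 \text{ on } \Gamma,
\]
with source and Neumann data calibrated so that compatibility holds, the mean curvature contribution being absorbed into the Jacobian estimate rather than the PDE itself, as in Brendle's scheme. Then introduce the transport map
\[
\Phi(x, y) := \nabla^M u(x) + y, \qquad x \in M,\ y \in T_x^\perp M,\ |y| \le 1,
\]
restricted to the contact set $A$ of points at which $u - \langle \cdot, \xi \rangle$ attains a global minimum (for a target $\xi \in \mathbb{R}^{n+m}$), admitting both interior and free boundary minimizers. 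The core covering claim is that $\Phi(A) \supset B_r \cap H$ for an appropriate half-space $H$ through the origin, where $r$ is determined by $f$ and $|M|$: at a free boundary point $p \in \Gamma$, the Neumann condition $\partial_\nu u = 0$ together with the outer orthogonality $\nu = -\nu_S$ and the convexity of $S$ confine $\Phi(p, \cdot)$ to one side of the tangent hyperplane $T_p S$, and the restricted normal cone argument lifts this pointwise obstruction to the global half-ball covering.

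To conclude, one bounds $|\det D\Phi|$ on $A$ via the AM--GM inequality applied to the $n+m$ eigenvalues of the augmented Hessian (tangential Hessian of $u$ together with the $y$-component of the second fundamental form of $M$), integrates over $A$, and compares with $|B_r \cap H| = \tfrac{1}{2} |B_r|$ to arrive at \eqref{riq}. For the equality case $m \le 2$, one forces every AM--GM inequality to be sharp: this pins down $\nabla^2 u = \tfrac{c}{n} I_n$ so that $u$ is quadratic, forces $M$ to be totally geodesic and hence affine, and combined with the free boundary data and the convex obstacle compels $M$ to be a flat half-$n$-ball meeting a tangent hyperplane of $S$ orthogonally. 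The main obstacle will be extending the restricted normal cone construction of \cite{CGR06}, natively phrased in $\mathbb{R}^n$, to the submanifold setting of arbitrary codimension $m$, in such a way that the half-ball covering is preserved and the factor $\tfrac{1}{2}$ is recovered \emph{exactly} in the final Jacobian integration; this is where the convexity of $\mathcal{C}$ must be exploited globally rather than merely locally near $\Gamma$.
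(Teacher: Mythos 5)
Your overall scaffold is right---mixed Neumann problem, transport map $\Phi(x,y)=\nabla u(x)+y$, AM--GM on the Jacobian, restricted normal cone to exploit the convexity of $S$, and rigidity from equality in AM--GM---but there are two genuine gaps that would derail the argument as written.

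First, the PDE is misstated in a way that matters. You write $\Delta_M u = nf/|M|$ with the mean curvature ``absorbed into the Jacobian estimate rather than the PDE itself, as in Brendle's scheme,'' but Brendle's scheme (and the proof here) puts $|H|$ \emph{in} the PDE: after the normalization $|\Sigma|+\int_M|H|\,dv=n|M|$ the equation is $\Delta u = n-|H|$. This is not cosmetic. The Jacobian formula gives $\det\mathrm{Jac}\,\Phi=\det(\nabla^2u-\langle\Pi_x,y\rangle)$ and AM--GM yields $\det\le\bigl(\tfrac{\Delta u-\langle H,y\rangle}{n}\bigr)^n$; only with $\Delta u=n-|H|$ and $|y|\le 1$ does this stay $\le 1$. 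With your constant source term, $\Delta u-\langle H,y\rangle$ can exceed $n$, and the crucial estimate $0\le\det\mathrm{Jac}\,\Phi\le 1$ on $A$ fails.

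Second, the covering claim $\Phi(A)\supset B_r\cap H$ for a single half-space $H$ is precisely the statement the paper warns \emph{does not hold} (``we can not hope that $\Phi(A)$ contains a half unit ball''). The direction $\nu_S(p)$ bounding the half-space varies with the contact point $p\in\Gamma$, so there is no global $H$. What one can prove---and what the restricted normal cone machinery is built to deliver---is the sliced version: for every $\rho\in(0,1)$, $|\Phi(\partial A_\rho)|\ge\tfrac12|\mathbb{S}^{N-1}(\rho)|$, which upon integration in $\rho$ (via Brendle's $t\to1^-$ device) still produces the factor $\tfrac12$. Getting this slice-wise bound requires not merely porting \cite{CGR06} to higher codimension, but replacing the normal cone by a \emph{generalized} normal cone adapted to $u$: $N^u_p X=\{\xi:\langle x-p,\xi\rangle\le u(x)-u(p)\ \forall x\in X\}$, which is no longer scale-invariant or spherically convex. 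The fix is to lift $X=\Gamma$ to the graph $\tilde X=\{(x,u(x))\}\subset\mathbb{R}^{N+1}$ and identify $N^u_pX$ with the slice $N_{\tilde p}\tilde X\cap\{\xi_{N+1}=-1\}$, where spherical convexity is restored; the vector $\tilde\rho(\nu_S(p),0)$ then plays the role of $\sigma(p)$ in the restricted cone. Without this graph lift the half-measure estimate of Proposition \ref{prop1.4} has no proof, and this is the actual technical core that your proposal elides when it says ``the restricted normal cone argument lifts this pointwise obstruction to the global half-ball covering.''

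One smaller point: in the equality case you must first justify that $\Sigma$ meets $S$ orthogonally (assumption \eqref{orthogonal 2}) before the PDE can even be set up with the needed regularity; this is handled by a first-variation argument (Proposition \ref{variational character} in the Appendix), and then the rest of your rigidity outline (forcing $\nabla^2u=g$, $\Pi\equiv 0$, reducing to the codimension-zero case) matches the paper.
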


  Inequality \eqref{riq} also implies  a relative version of  Michael-Simon and  Allard inequality. See  Theorem \ref{coro2} at the end of Section \ref{sec3}.
 

 As a Corollary,  we obtain  the optimal  relative isoperimetric inequality for minimal submanifolds
 in the Euclidean space, provided that the codimension $m:=N-n \le 2$.
 Namely we solve the open problem  which  was proposed  by  Choe, Open problem 12.6  in \cite{Choe2005}, if the  codimension is not bigger than  $2$.
 \begin{cor}\label{coro1}
 Let $M^n\subset \R^{n+m}$ $(m\le 2)$ be a  partially free  boundary minimal   submanifold 
with relative boundary  $\Sigma$ and free boundary $\Gamma$ on a convex support hypersurface $S$. We have
	\begin{equation}\label{riq2}
 	\frac{|\S|}{|\partial \mathbb{B}^n|}\geq \left(\frac 12\right)^{\frac{1}{n}} \left(\frac{\mbox{}|M|}{\mbox{}|\mathbb{B}^n|}\right)^{\frac{n-1}{n}},
 	\end{equation}
moreover,  equality holds if and only if 
 $M$ is a flat half $n$-ball.
 \end{cor}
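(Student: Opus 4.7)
The plan is to obtain Corollary \ref{coro1} as an immediate specialization of Theorem \ref{thm1.2}. First I would observe that the hypotheses of Corollary \ref{coro1} are strictly stronger than those of Theorem \ref{thm1.2}: a partially free boundary minimal submanifold in $\RR^{n+m}$ with a convex support hypersurface $S$ is in particular a partially free boundary submanifold, so inequality \eqref{riq} applies verbatim.

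Next I would use minimality to drop the mean curvature term. Since $M$ is minimal, the mean curvature vector $H$ vanishes identically on $M$, hence $\int_M |H|\, dv = 0$. Thus the left-hand side of \eqref{riq} reduces to $\frac{|\Sigma|}{|\partial \mathbb{B}^n|}$. On the right-hand side, the restriction $m \le 2$ means that the constant $b_{n,m}$ defined in \eqref{eq_0.1} equals $1$. Substituting these two simplifications into \eqref{riq} gives exactly \eqref{riq2}.

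For the equality characterization, I would simply invoke the corresponding rigidity statement in Theorem \ref{thm1.2}: since the minimal hypothesis is compatible with $H=0$ and since $m \le 2$, the ``if and only if'' conclusion of Theorem \ref{thm1.2} transfers directly, yielding that equality in \eqref{riq2} holds precisely when $M$ is a flat half $n$-ball. In that case $M$ is automatically minimal (a flat piece has $H = 0$) and sits orthogonally against the flat face of the half-ball, which is consistent with having a convex support hypersurface.

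There is no genuine obstacle here; the entire content of Corollary \ref{coro1} is packaged inside Theorem \ref{thm1.2}, and the corollary simply advertises the two consequences $H\equiv 0$ and $b_{n,m}=1$ that make the general inequality \eqref{riq} coincide with Choe's conjectural optimal form \eqref{riq2}. The only thing to emphasize is therefore the matching of constants and the transfer of the rigidity statement.
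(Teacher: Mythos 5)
Your proposal is correct and matches the paper's own (implicit) derivation: the corollary is obtained from Theorem \ref{thm1.2} by substituting $H\equiv 0$ (minimality) and $b_{n,m}=1$ (for $m\le 2$), with the rigidity statement transferring directly since a flat half $n$-ball is indeed minimal. Nothing more is required.
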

 Inequality \eqref{riq2} is equivalent to
 \[
 \frac{|\S|}{| \SS_+^{n-1}|}\geq \left(\frac{\mbox{}|M|}{\mbox{}|\mathbb{B}_+^n|}\right)^{\frac{n-1}{n}},
 \]
 where $\BB^n_+$ is the unit half ball and $\SS^{n-1}_+$ is the unit half sphere.  We emphasize that  both   conditions,  the orthogonality of the intersection between $M$ and   $S$  and  the convexity of $S$ are
  necessary, since it is 
 easy to find a counterexample if one of these two  conditions is missing.
  There has been a lot of work on the relative isoperimetric inequality on minimal submanifolds,
 especially on minimal surfaces by Choe and his school. See again the nice survey of Choe \cite {Choe2005} and references therein. An Almgren type result was proved recently
 by Krummel \cite {Kru2017}, namely, the \riq holds when $M$ is area-minimizing with partially free boundary on a convex hypersurface,
 following closely the method given by Almgren \cite{A}.

 When $M=\O$ is a bounded domain in $\RR^n$, i.e., $N=n$,  one can view $\O$ as a minimal submanifold in $\RR^n$ with codimension $0$.
  In this case, Corollary \ref{coro1} is the relative isoperimetric inequality 
 proved by Choe, Ghomi and Ritor\'e in 2007. 
 
 \begin{thm}[\textbf{Choe-Ghomi-Ritor\'e \cite{CGR07}}]\label{thm1.4} 
 Let $\Omega=M \subset  \RR^ n$ be a bounded domain outside a convex body  $\C$ with its 
boundary $\P \Omega$ consisting of two smooth pieces $\Sigma$ and $\Gamma$, 
where $\Gamma\subset \p \C$ 
{and $\Sigma \subset \R^n\backslash {\mathcal C}$}. 
  Then
  \begin{equation} \label{eq_iso2}
 	\frac{|\S|}{|\partial \mathbb{B}^n|}\geq  \left(\frac 12 \right)^{\frac 1 n}  \left(\frac{|\O|}{|\BB^n|}\right)^{\frac{n-1}{n}},
 	\end{equation}
moreover,	equality  holds  if and only if  $\O$ is a flat  half $n$-ball.
 \end{thm}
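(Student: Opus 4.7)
The plan is to mimic Cabr\'e's ABP proof of the classical isoperimetric inequality, together with the restricted normal cone idea from \cite{CGR06}. First, solve the mixed Neumann problem
\begin{equation*}
\Delta u = c \text{ in } \Omega, \qquad \p_\nu u = 1 \text{ on } \Sigma, \qquad \p_\nu u = 0 \text{ on } \Gamma,
\end{equation*}
where $c = |\Sigma|/|\Omega|$ is forced by the Neumann compatibility condition. Given $p \in \R^n$, let $x_p \in \overline\Omega$ minimize $x \mapsto u(x) - p\cdot x$. A first-order analysis at $x_p$ gives three cases: (i) if $x_p$ is interior, then $\nabla u(x_p) = p$ and $D^2 u(x_p) \geq 0$; (ii) if $x_p$ lies in the interior of $\Sigma$, then $p = \nabla u(x_p) + \lambda\nu_\Sigma$ for some $\lambda \geq 0$, so $p\cdot\nu_\Sigma = 1 + \lambda \geq 1$ and hence $|p| \geq 1$; (iii) if $x_p$ lies in the interior of $\Gamma$, then $p = \nabla u(x_p) - \lambda\,\nu_S(x_p)$ for some $\lambda \geq 0$, with $\nabla u(x_p) \perp \nu_S(x_p)$ (using $\p_{\nu_\Omega} u = 0$ and $\nu_\Omega = -\nu_S$ along $\Gamma$).

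In particular, every $p \in B_1$ falls into case (i) or (iii), modulo null sets. Set $\mathcal{A} = \{p \in B_1 : \text{(i) holds}\}$ and $\mathcal{B} = \{p \in B_1 : \text{(iii) holds}\}$, so $B_1 = \mathcal{A} \cup \mathcal{B}$ mod null sets. The technical heart of the argument is the half-ball estimate $|\mathcal{B}| \leq \tfrac12 |B_1|$. For this, parametrize $\mathcal{B}$ via $F : \Gamma \times [0, \infty) \to \R^n$, $F(x, \lambda) = \nabla u(x) - \lambda\,\nu_S(x)$. Using $\nabla u(x) \in T_x S$ and a frame adapted to $S$, a direct computation yields
\begin{equation*}
|\det DF(x, \lambda)| = \bigl|\det\bigl(D^2_\Gamma u(x) - \lambda\, A_S(x)\bigr)\bigr|,
\end{equation*}
where $D^2_\Gamma u$ is the tangential Hessian of $u$ and $A_S$ is the second fundamental form of $S$. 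Convexity of $\mathcal{C}$ gives $A_S \geq 0$, so this Jacobian cannot exceed the one obtained in the half-space model ($A_S \equiv 0$, $\nu_S$ constant), where $F$ sweeps out only the half-ball $\{p\cdot\nu_S \leq 0\} \cap B_1$ of measure $|B_1|/2$. The generalization of the restricted normal cone construction of \cite{CGR06} to this ABP framework is what makes this comparison rigorous.

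With the half-ball estimate in hand, conclude by the standard ABP argument on the interior lower-contact set $\Gamma_u \subset \Omega$: using AM-GM on the non-negative eigenvalues of $D^2 u$ and the area formula for $\nabla u : \Gamma_u \to \mathcal{A}$,
\begin{equation*}
\tfrac12 |\BB^n| \leq |\mathcal{A}| \leq \int_{\Gamma_u} \det D^2 u\,dx \leq \bigg(\frac{c}{n}\bigg)^n |\Omega|.
\end{equation*}
Rearranging and using $|\p \BB^n| = n|\BB^n|$ together with $c|\Omega| = |\Sigma|$ recovers \eqref{eq_iso2} with the sharp constant $(1/2)^{1/n}$. The main obstacle is the half-ball estimate itself, where the restricted normal cone machinery must be lifted to the ABP setting and combined with the pointwise convexity of $S$. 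For the equality case, saturation in AM-GM pins $u$ as a quadratic with $D^2 u = (c/n)\,\mathrm{Id}$, and saturation in the half-ball estimate forces $A_S \equiv 0$ on the relevant portion of $\Gamma$, so $\mathcal{C}$ is a half-space locally; together with the orthogonality of $\Sigma$ and $S$, these constraints force $\Omega$ to be a flat half $n$-ball.
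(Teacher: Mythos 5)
The overall ABP scheme is right and matches the paper: solve the mixed Neumann problem, observe that $p\in B_1$ can only produce an interior or free-boundary touching point, and then try to show that a set of measure at least $\tfrac12|B_1|$ is swept by $\nabla u$ on the lower contact set. Your case analysis (i)–(iii) is correct, including the observation that $|p|\ge 1$ when the minimum lands on $\Sigma$, and your final ABP chain $\tfrac12|\BB^n|\le|\mathcal A|\le\int_{\Gamma_u}\det D^2u\le (c/n)^n|\Omega|$ is exactly what the paper does.

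However, the heart of the proof — your ``half-ball estimate'' $|\mathcal B|\le\tfrac12|B_1|$ — is asserted rather than proved, and the mechanism you sketch does not work. You parametrize $\mathcal B$ by $F(x,\lambda)=\nabla u(x)-\lambda\nu_S(x)$, compute a Jacobian, and claim that convexity ($A_S\ge 0$) makes this Jacobian ``no larger than in the half-space model,'' hence $|\mathcal B|\le\tfrac12|B_1|$. This conflates a pointwise Jacobian bound with an image-measure bound. In the half-space model the $\tfrac12$ comes from the fact that the image lies in a fixed half-space $\{p\cdot\nu_S\le 0\}$ — it has nothing to do with the size of the Jacobian. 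When $\nu_S$ varies along $\Gamma$, the image of $F$ is no longer contained in any fixed half-space, so having a pointwise smaller Jacobian on the relevant (PSD) subset gives no bound of the form $\tfrac12|B_1|$. The obstruction is precisely the one the paper is built to overcome: the ``generalized normal cone'' condition $\langle x-p,\xi\rangle\le u(x)-u(p)$ is not scale-invariant, the resulting fiber $N^u_p\Gamma$ is in general not spherically convex, and $\nu_S(p)$ need not lie in $N^u_p\Gamma$ — so the restricted normal cone half-measure argument of \cite{CGR06} cannot be applied as-is. What actually makes the half-measure bound go through in the paper is the lift of $\Gamma$ to its graph $\tilde\Gamma=\{(x,u(x))\}\subset\R^{n+1}$: the lifted condition becomes a genuine normal-cone condition $\langle\tilde x-\tilde p,(\xi,-1)\rangle\le 0$, the ordinary normal cone $N_{\tilde p}\tilde\Gamma$ is spherically convex (Lemma 4.1 of \cite{CGR06}), $\tilde\rho(\nu_S(p),0)\in N_{\tilde p}\tilde\Gamma$, and a reflection-across-a-great-sphere argument then yields $|N^u_p\Gamma/\nu_S|\ge\tfrac12|N^u_p\Gamma|$ (Lemma~\ref{lem1.6}), which is summed over $p$ via the finite-set approximation of \cite{CGR06} (Lemma~\ref{lem3} and the Hausdorff-limit argument). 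You are also bounding the complementary set $\mathcal B$ from above, whereas the paper bounds the covered cone $N^u\Gamma/\nu_S\subset\nabla u(\p\Gamma_+^\rho)$ from below; these are complementary decompositions and both could in principle work, but the estimate you assert has no proof behind it, and none of the graph-lift/spherical-convexity/approximation structure that constitutes the actual technical content of Proposition~\ref{prop1.4} appears in your sketch. You also omit the orthogonality assumption on $\Sigma\cap\Gamma$ needed for the regularity of $u$ and the approximation argument that reduces the general case to the orthogonal one.
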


 It is  this relative isoperimetric inequality and other results for minimal surfaces obtained by  Choe and his colleagues 
that motivated  Choe to propose the above open problem in \cite{Choe2005} in 2005.

There have been a lot of proofs of the classical isoperimetric inequality \eqref{eq_iso1}.  Here we just mention one proof for smooth domains, which is important for our paper.
In  \cite{Cabre2008} Cabr\'e provided a proof by using ideas of the Alexandrov-Bakelman-Pucci maximum principle. A similar  idea was used by Trudinger in \cite{T}. See also further applications in 
 \cite{Cabre17}.
 Brendle's  method to prove Theorem \ref{thm1.1}   is a clever extension of the ABP proof of Cabr\'e. The first interesting question is: 
 whether there is an ABP proof for the \riq of Choe-Ghomi-Ritor\'e \cite{CGR07}, namely, Theorem \ref{thm1.4}?  In this paper we 
 first provide an ABP proof for the \riqq, by modifying  interesting ideas, the restricted normal cones, 
 given in another paper of Choe-Ghomi-Ritor\'e \cite{CGR06}. The original proof of the \riq in  \cite{CGR07}  relies crucially on   
 \cite{CGR06}. 
 The key  is Proposition \ref{prop1.4} below,  in which we obtain an optimal area estimate for generalized normal cones defined in the next section.
 This area estimate
 generalizes a key result  proved by Choe-Ghomi-Ritor\'e \cite{CGR06}. This is our main own contribution.
 With this optimal area estimate
 we  use the ABP technique in \cite{Cabre2008} to provide a new proof of Theorem \ref{thm1.4}. 
Then we use it and the tricks given by Brendle to generalize the results in \cite{Brendle2019} to the relative case.
A boundary version of 
 Michael-Simon and  Allard inequality  \cite{MS}, \cite{Allard} follows now easily. See Theorem 4.4.  The result of Brendle for the 
 logarithmic Sobolev inequality in  \cite{Brendle2019b} can also be generalized to obtain a relative version.
 
The isoperimetric inequalities,  the Michael-Simon and  Allard inequality,  are very useful in differential
 geometry and geometric analysis, especially in the study of minimal submanifolds and curvature flows for closed submanifolds. We believe that 
 our relative inequalities are also very
 useful for the related problems with boundary, especially curvature flows of submanifolds (or hypersurfaces) with free boundary, on which 
 there has been recently a lot of work as partially mentioned above.

 \
 
\textit{ The paper is organized as follows.} In Section \ref{sec0}, we first prove the key Proposition, Proposition \ref{prop1.4},
and then provide an ABP proof for the \riq of domain in $\mathbb{R}^n$, namely Theorem \ref{thm1.4} in Section \ref{sec2}.
We   prove Theorem \ref{thm1.2} and the relative  Michael-Simon and  Allard inequality, Theorem \ref{coro2},
 in Section \ref{sec3}.

 \section{A generalized cone}\label{sec0}

Let us first introduce the (unit) {\it normal cone}, which is a standard concept. See for instance \cite{CGR06}.  For any subset $X \subset \R^N$ and any point $p\in \R^N$, the
(unit) normal cone of $X$ at $p$ is defined by
\[N_pX:=\{ \xi \in \SS^{N-1} \,|\, \< x-p,\xi \> \le 0, \quad \forall \, x\in X\}.\]
Set
\[
 NX:=\cup_{p\in X} N_pX.
\] If we have further a map $\sigma:X\to\SS^{N-1}$, we define the {\it restricted normal cone} of $X$ at $p\in X$ (with respect to $\sigma$) as in \cite{CGR06}
\[
 N_pX\slash \sigma:= N_pX \cap {H_{\sigma(p)}},
\]where $H_\eta$ ($\eta\in \SS^{N-1}$) is the half space defined by
\[
H_\eta:=\{y\in \R^N \,|\, \<y, \eta\> \geq 0\},
\]
and set
\[
 NX\slash \sigma:= \cup_{p\in X}  N_pX\slash \sigma.
\]

The following interesting Proposition was proved in \cite[Proposition 5.3]{CGR06}.
\begin{prop}[\textbf{Choe-Ghomi-Ritor\'e  \cite{CGR06}}]\label{prop1.1}
Let $X\subset \R^N$ be a compact set which is disjoint from the relative interior of its convex hull. 
Suppose there exists a continuous mapping $\sigma : X\to \SS^{N-1}$ such that $\sigma(p) \in N_pX$ for all $p\in X.$ Then,
\begin{equation}
 \label{eq1.3}
 |NX\slash \sigma| \ge \frac 12  |\SS^{N-1}|.
\end{equation}
\end{prop}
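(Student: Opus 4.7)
A natural first attempt is the pointwise antipodal claim that $\xi\in NX/\sigma$ or $-\xi\in NX/\sigma$ for every $\xi\in\mathbb{S}^{N-1}$, which would immediately yield the bound via the measure-preservation of the antipodal map on $\mathbb{S}^{N-1}$. However, this pointwise statement is too strong: already for the two-point set $X=\{(0,0,0),(0,0,1)\}\subset\mathbb{R}^3$ with the continuous selection $\sigma((0,0,0))=(1,0,0)$, $\sigma((0,0,1))=(-1,0,0)$, one computes $NX/\sigma$ to be the disjoint union of the ``SE'' and ``NW'' spherical quadrants, which has measure exactly $\tfrac{1}{2}|\mathbb{S}^2|$, yet both $\xi=\tfrac{1}{\sqrt 2}(-1,0,-1)$ and $-\xi$ fail to lie in $NX/\sigma$. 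So the argument must be global rather than pointwise, and this example simultaneously shows that the bound is sharp.

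Instead, the plan is an area-theoretic argument on the restricted normal bundle
\[
Y_\sigma:=\{(p,\xi)\in X\times\mathbb{S}^{N-1}\,:\,\xi\in N_pX,\ \langle\xi,\sigma(p)\rangle\ge 0\},
\]
whose projection onto the second factor is exactly $NX/\sigma$. The hypothesis that $X$ is disjoint from the relative interior of $\operatorname{conv}(X)$ is used twice: by Hahn--Banach it gives $\cup_{p\in X}N_pX=\mathbb{S}^{N-1}$, and by the almost-everywhere differentiability of the support function of $\operatorname{conv}(X)$ it implies that for a.e.\ $\xi$ the supporting point $p(\xi)\in\operatorname{argmax}_X\langle\cdot,\xi\rangle$ is unique. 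Consequently $NX/\sigma=\{\xi:\langle\xi,\sigma(p(\xi))\rangle\ge 0\}$ up to a set of measure zero, and the target reduces to showing that this sublevel set has spherical measure at least $\tfrac{1}{2}|\mathbb{S}^{N-1}|$.

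The main obstacle is to turn the pointwise condition $\sigma(p)\in N_pX$ together with the continuity of $\sigma$ into this global lower bound. Following the restricted-normal-cone philosophy of Choe--Ghomi--Ritor\'e, my plan is to construct, on each stratum of the face structure of $\partial\operatorname{conv}(X)$, a continuous deformation that carries $\xi$ to a direction inside $H_{\sigma(p(\xi))}$ while remaining in $N_{p(\xi)}X$ (this uses convexity of the normal cone and the fact that $\sigma(p(\xi))\in N_{p(\xi)}X$), and then to glue these local deformations across strata using the continuity of $\sigma$. The hardest step will be this gluing together with the bookkeeping of the projection multiplicities of $Y_\sigma\to NX/\sigma$: the map $\xi\mapsto p(\xi)$ is generally discontinuous across face strata, so the local area estimates must be combined carefully, with the sharp two-point example serving as the guide that the local hemispherical contributions fit together to fill exactly half of $\mathbb{S}^{N-1}$.
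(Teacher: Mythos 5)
Your opening observation is good: the pointwise antipodal claim indeed fails, and your two-point example both refutes it and shows sharpness. The reduction via a.e.\ differentiability of the support function of $\operatorname{conv}(X)$ --- so that for a.e.\ $\xi\in\SS^{N-1}$ the supporting point $p(\xi)$ is a unique extreme point of $\operatorname{conv}(X)$, hence lies in $X$, and $NX/\sigma = \{\xi : \langle\xi,\sigma(p(\xi))\rangle\ge 0\}$ up to measure zero --- is also sound, and it is actually a cleaner route than the approximation-by-finite-sets scheme that the paper (following \cite{CGR06}) carries out for its generalization, Proposition~\ref{prop1.4}.

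The gap is in the last paragraph, where the proposal becomes vague exactly at the point where a concrete lemma is needed. The phrase ``construct a continuous deformation that carries $\xi$ into $H_{\sigma(p(\xi))}$ while remaining in $N_{p(\xi)}X$, then glue across strata'' does not pin down an area estimate: a deformation produces no measure comparison unless it is measure-preserving, and you never say why yours would be. What is actually needed, and what the paper's argument (Lemma~\ref{lem1.6}, in the generalized setting) supplies, is a \emph{reflection} statement: since $N_pX$ is the intersection of closed hemispheres $\{\xi:\langle x-p,\xi\rangle\le 0\}$, it is spherically convex, and since $\sigma(p)\in N_pX$, the Euclidean reflection $R_p(\xi)=\xi-2\langle\xi,\sigma(p)\rangle\,\sigma(p)$ across the hyperplane $\{\langle\cdot,\sigma(p)\rangle=0\}$ maps $N_pX\setminus H_{\sigma(p)}$ into $N_pX\cap H_{\sigma(p)}$ (the reflected point lies on the geodesic arc from $\sigma(p)$ to $\xi$, which stays in $N_pX$). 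Being a rigid motion, $R_p$ preserves spherical measure, so $|N_pX/\sigma|\ge\frac12|N_pX|$ for every $p$.

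Once you have this, the ``gluing across strata'' difficulty you anticipate is a phantom. Your own reduction shows the cones $\{N_pX\}_{p\in X}$ tile $\SS^{N-1}$ up to a null set; at most countably many of them have positive measure; and the reflection estimate is purely local to each $N_pX$. So you simply sum:
\[
|NX/\sigma|\;\ge\;\sum_{p:\,|N_pX|>0}|N_pX/\sigma|\;\ge\;\tfrac12\sum_{p:\,|N_pX|>0}|N_pX|\;=\;\tfrac12|\SS^{N-1}|,
\]
with no need to track multiplicities of the projection $Y_\sigma\to NX/\sigma$ or to make $\xi\mapsto p(\xi)$ continuous. Note also that the paper cites this proposition from \cite{CGR06} rather than reproving it; the argument it does present is for Proposition~\ref{prop1.4} and proceeds by establishing the reflection lemma for normal cones of the graph of $u$, then reducing to finite $X$ and passing to the limit in Hausdorff distance --- same pointwise reflection step, but a different global bookkeeping than yours. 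Your a.e.-uniqueness route avoids the approximation entirely, but it still must be completed by the reflection lemma, which is the idea your write-up is missing.
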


\begin{rem} 
 \label{rmk2.2} For the later use, we need to consider the normal cone and the restricted normal cone of 
 length $\rho\in (0, \infty)$. The normal cone of
 length $\rho\in (0,\infty)$ of $X$ at $p$  is defined
 by
 \[
  N^\rho_pX:=\{ \xi \in \SS^{N-1}(\rho) \,|\, \< x-p,\xi \> \le 0, \quad \forall \, x\in X\},
 \]
where $\SS^{N-1}(\rho)$ is the sphere of radius $\rho$ centered at the origin.
One can similarly define $N^\rho X$ and $N^\rho X\slash \sigma$
for a function  $\sigma(p) \in N_pX$. Due to the scaling invariance of the condition $\< x-p,\xi \> \le 0$ for $\xi$, it
is trivial to see that $N^\rho X=\rho NX.$ Hence,
under the same assumptions as in Proposition \ref{prop1.1} we have
\begin{equation}\label{eq1.4}
 |N^\rho X\slash \sigma|\ge \frac 12 |\SS^{N-1}(\rho)|=\frac 12 \rho^{N-1}|\SS^{N-1}|.
\end{equation}
The scaling invariance is clearly not true for the generalized normal cone, which we will now define.
\end{rem}

Now we generalize the  concept of the normal cone and the restricted normal cone to our case as follows. 
For any subset $X \subset \R^N$, any point $p\in \R^N$ and any function $u:\R^N \to \R$, we introduce the
 {\it generalized normal cone} of length $\rho$ of $X$ at $p$ with respect to $u$ defined by
\[N^{u,\rho}_pX:=\{ \xi \in \SS^{N-1} (\rho)\,|\, \< x-p,\xi \> \le u(x)-u(p), \quad \forall \, x\in X\}\]
and set
\[
 N^{u,\rho}X:=\cup_{p\in X} N^{u,\rho}_pX.
\]
Here, for the simplicity of notation, we omit the superscript $\rho$ if there is no confusion.
If we have further a map $\sigma:X\to\SS^{N-1}$, we define the {\it generalized restricted normal cone} of $X$ at $p\in X$ with respect to $u$ 
\[
 N^u_pX\slash \sigma:= N^u_pX \cap {H_{\sigma(p)}}
\]
and set
\[
 N^uX\slash \sigma:= \cup_{p\in X}  N^u_pX\slash \sigma.
\]

When $u$ is a constant function, then  both definitions are certainly the same. But when $u$ is not a constant function, 
there are at least  two big differences, which prevent us to directly use the results in \cite{CGR06} to prove an analogous inequality to \eqref{eq1.3} or \eqref{eq1.4}.
The first difference is that
the condition 
\[\< x-p,\xi \> \le u(x)-u(p),\]
is not scaling invariant. This causes that
$N^u_pX$ might be not spherical convex, while $N_pX$ is. The latter is crucial  for the validity  of \eqref{eq1.3}.
The second  difference, which is also crucial in the later applications, is that  
$\sigma(p)=\nu (p) \in N_p\Gamma$ is true, when $\Gamma=X$ is a set lying on the convex hypersurface, 
but $\sigma(p)=\nu (p)\in N^u_p\Gamma$ is in general not true.

To overcome these difficulties, instead of $X$, we consider the graph of $u$
\[
\tilde  X:=\{ (x, u(x)) \,|\, x\in X\}
,\]
in $\R^N\times \R=\R^{N+1}$. For $x\in X$, we set $\tilde x= (x, u(x)) \in \tilde X$. It is important to remark that if $X$ is disjoint from the relative interior
of its convex hull, so is  $\tilde X$.
We consider the  normal cone $N^{\tilde \rho} \tilde X=\cup_{\tilde  p\in \tilde X}N^{\tilde \rho} _{\tilde p}\tilde X$ of length $\tilde \rho= \sqrt{1+\rho^2}$ 
by viewing $\tilde X$ as a subset in $\R^{N+1}$,
i.e.
\[
 N^{\tilde \rho}_{\tilde p}\tilde X=\{
 \bar \xi:=(\xi, \xi_{N+1})\in \SS^{N}(\tilde{\rho}) \, |\, \<\tilde  x-\tilde  p, \bar \xi \> \le 0, \quad \forall \tilde x\in \tilde X
 \}.
\]
In this way we embed a generalized normal cone $N^u_pX$ w.r.t to a function $u$  into a normal cone $N^{\tilde \rho}_{\tilde p}\tilde X$, with $\tilde \rho:=\sqrt{1+\rho^2}$. 
For the simplicity of  notation we also omit the superscript $\tilde \rho$, if there is no confusion. 
We have the following simple observation.
\begin{lem} \label{lem2} For any $\rho \in (0,  \infty)$, set  $\tilde \rho=\sqrt{1+\rho^2}$. We have
 \begin{equation}
 \label{embedding}
\xi\in  N^{u,\rho}_pX \Longleftrightarrow 
(\xi, -1) \in  N_{\tilde p}^{\tilde \rho}\tilde X.
\end{equation}
As a result, we can identify $ N^u_pX$ with   $N_{\tilde p}\tilde X \cap \{\xi_{N+1}=-1\}.$
\end{lem}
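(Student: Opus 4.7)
The plan is a direct verification, unpacking the two definitions, since the content of the lemma is essentially that the "tilt" of the affine condition $\langle x-p,\xi\rangle \le u(x)-u(p)$ can be absorbed by passing from $\mathbb{R}^N$ to the graph in $\mathbb{R}^{N+1}$.

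First I would check that the length constraint matches: if $\xi\in\mathbb{S}^{N-1}(\rho)$, then $|(\xi,-1)|^2=\rho^2+1=\tilde\rho^2$, so $(\xi,-1)\in\mathbb{S}^N(\tilde\rho)$, and conversely if $(\xi,-1)\in\mathbb{S}^N(\tilde\rho)$ then $|\xi|^2=\tilde\rho^2-1=\rho^2$, giving $\xi\in\mathbb{S}^{N-1}(\rho)$.

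Next, for the geometric condition, I would compute for arbitrary $x\in X$, writing $\tilde x-\tilde p=(x-p,\,u(x)-u(p))$, that
\begin{equation*}
\langle \tilde x-\tilde p,\,(\xi,-1)\rangle_{\mathbb{R}^{N+1}} \;=\; \langle x-p,\xi\rangle_{\mathbb{R}^N}-\bigl(u(x)-u(p)\bigr).
\end{equation*}
Hence the inequality $\langle \tilde x-\tilde p,(\xi,-1)\rangle\le 0$ holding for all $\tilde x\in\tilde X$ is exactly the inequality $\langle x-p,\xi\rangle\le u(x)-u(p)$ holding for all $x\in X$. Combining this equivalence with the length check gives the biconditional \eqref{embedding}, and the final identification $N^u_pX \leftrightarrow N_{\tilde p}\tilde X\cap\{\xi_{N+1}=-1\}$ is just a restatement.

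There is no real obstacle here; the lemma is a definitional translation. The only thing to be careful about is bookkeeping the radii $\rho$ and $\tilde\rho=\sqrt{1+\rho^2}$ correctly on both sides, and remembering that the last coordinate of the lifted vector must be exactly $-1$ (not some other negative scalar) for the shift $u(x)-u(p)$ to appear with the right coefficient. The real work of using this identification — in particular, recovering spherical convexity and the containment $\nu(p)\in N^u_p\Gamma$ that failed in the unlifted setting — will be done in subsequent lemmas; here I would simply record the equivalence.
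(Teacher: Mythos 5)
Your proof is correct and is essentially the same as the paper's: both unpack the definitions and observe that $\langle \tilde x-\tilde p,(\xi,-1)\rangle = \langle x-p,\xi\rangle-(u(x)-u(p))$, so the two membership conditions coincide. You are slightly more explicit in also checking the radius bookkeeping ($|\xi|=\rho \Leftrightarrow |(\xi,-1)|=\tilde\rho$), which the paper leaves implicit.
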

\begin{proof}
 The Lemma follows trivially from the fact that the statement
 \[
   \< x-p,\xi \> \le u(x)-u(p), \quad \text{ for all } x\in X,
 \] 
is equivalent to the statement
\[
 \< \tilde x-\tilde p,\bar \xi \> \le 0, \quad \text{ for all } \tilde{x}\in \tilde{X} \text{  with }  \bar \xi :=(\xi, -1).
\]  Here, also as above, we use the notation that $\tilde x:=(x, u(x))$ and $\tilde p:=(p, u(p))$.
\end{proof}

Now we state our generalization of Proposition \ref{prop1.1}.

\begin{prop}\label{prop1.4}
Let $X\subset \R^N$ be a compact set that is disjoint from the relative interior of its convex hull and $u:X\to \RR$ a continuous function. 
Suppose there exists a continuous mapping $\sigma : X\to \SS^{N-1}$ such that 
$\bar\sigma := \tilde \rho \cdot (\sigma(p), 0) \in N_{\tilde p} \tilde X\subset \SS^N(\tilde \rho)$ for all $p\in X.$ Then,
\begin{equation}
 \label{eq1.9}
 |N^uX\slash \sigma| \ge \frac 12  |\SS^{N-1}(\rho)|.
\end{equation}
\end{prop}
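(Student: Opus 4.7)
The plan is to lift $X$ to its graph $\tilde X=\{(x,u(x)):x\in X\}\subset\R^{N+1}$ and reduce Proposition \ref{prop1.4} to (an adaptation of) Proposition \ref{prop1.1}, using Lemma \ref{lem2} to connect the two pictures.

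The first step is to verify that $\tilde X\subset\R^{N+1}$ satisfies the hypotheses of Proposition \ref{prop1.1}. Compactness is immediate from continuity of $u$. To see that $\tilde X$ is disjoint from the relative interior of $\mathrm{conv}(\tilde X)$, note that the coordinate projection $\pi:\R^{N+1}\to\R^N$ sends $\mathrm{conv}(\tilde X)$ onto $\mathrm{conv}(X)$, and its restriction to the affine hull of $\mathrm{conv}(\tilde X)$ is an open linear surjection onto the affine hull of $\mathrm{conv}(X)$; hence any $\tilde p=(p,u(p))\in\tilde X$ lying in the relative interior of $\mathrm{conv}(\tilde X)$ would project to a point $p\in X$ in the relative interior of $\mathrm{conv}(X)$, contradicting the hypothesis on $X$. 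Moreover, the assumption $\bar\sigma(\tilde p)\in N_{\tilde p}\tilde X$ unpacks to $\langle x-p,\sigma(p)\rangle\leq 0$ for all $x\in X$, i.e., $\sigma(p)\in N_pX\subset\SS^{N-1}$, so the unit map $\tilde\sigma(\tilde p):=(\sigma(p),0)$ is a valid normal assignment for $\tilde X$ in the sense of Proposition \ref{prop1.1}.

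Next, by Lemma \ref{lem2} together with the observation that $H_{\bar\sigma(\tilde p)}\subset\R^{N+1}$ depends only on the horizontal coordinates, the restricted generalized cone $N^u_pX/\sigma$ is identified, via the isometry $\xi\mapsto(\xi,-1)$, with the slice $N^{\tilde\rho}_{\tilde p}\tilde X/\bar\sigma\cap\tilde K$ of $\SS^N(\tilde\rho)$, where $\tilde K:=\SS^N(\tilde\rho)\cap\{\xi_{N+1}=-1\}$ is an $(N-1)$-sphere isometric to $\SS^{N-1}(\rho)$. The desired inequality \eqref{eq1.9} therefore reduces to the slice estimate
$$|N^{\tilde\rho}\tilde X/\bar\sigma\cap\tilde K|_{N-1}\ \ge\ \tfrac12\,|\tilde K|_{N-1}.$$

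The main obstacle is that Proposition \ref{prop1.1} applied directly to $\tilde X$ yields only an $N$-dimensional bound on all of $\SS^N(\tilde\rho)$, which does not obviously descend to the $(N-1)$-dimensional bound on the slice $\tilde K$ that we need. My plan for overcoming this is to rerun the Choe-Ghomi-Ritor\'e argument of \cite{CGR06} at the slice level. For every $\xi\in\SS^{N-1}(\rho)$, a maximizer $p(\xi)$ of the $u$-twisted support functional $x\mapsto\langle x,\xi\rangle-u(x)$ on $X$ satisfies $\xi\in N^u_{p(\xi)}X$, so in fact $N^u X=\SS^{N-1}(\rho)$ and only the restriction by $\sigma$ has any content. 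The equivalence $\sigma(p)\in N_pX$ derived above provides the same hemisphere-consistency between $\sigma$ and the Gauss-type map $\xi\mapsto\sigma(p(\xi))$ that underlies the proof of Proposition \ref{prop1.1}; since the $u$-twist alters only which point realizes a given direction $\xi$ and does not disturb the spherical/antipodal structure on $\SS^{N-1}(\rho)$, the hemisphere/pairing argument of \cite{CGR06} should transfer to $\tilde K\cong\SS^{N-1}(\rho)$ with the $u$-twisted support functional in place of the classical one, yielding the sharp half-measure estimate.
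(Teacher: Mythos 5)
Your overall strategy — lift $X$ to the graph $\tilde X$, use Lemma \ref{lem2} to identify $N^u_pX$ with the slice $N_{\tilde p}\tilde X\cap\{\xi_{N+1}=-1\}$, observe that $N^uX=\SS^{N-1}(\rho)$ via the twisted support functional, and recognize that what is actually needed is an $(N-1)$-dimensional slice estimate rather than the $N$-dimensional bound Proposition \ref{prop1.1} gives on $\SS^N(\tilde\rho)$ — is exactly the route the paper takes. But your final step is where the argument actually has to earn its keep, and it currently rests on an unjustified (indeed, false as stated) claim: you assert that the $u$-twist ``does not disturb the spherical/antipodal structure on $\SS^{N-1}(\rho)$,'' so the hemisphere/pairing argument of \cite{CGR06} ``should transfer.'' This is precisely the point the paper singles out as the obstruction: unlike $N_pX$, the generalized cone $N^u_pX$ is \emph{not} in general spherically convex in $\SS^{N-1}(\rho)$, because the defining condition $\langle x-p,\xi\rangle\le u(x)-u(p)$ is not scale invariant in $\xi$. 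The reflection/pairing argument of \cite{CGR06} depends essentially on spherical convexity of the pointwise cone, so it does not transfer ``as is'' to the slice picture.

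The missing ingredient is the paper's Lemma \ref{lem1.6}, which repairs this by working one dimension up. One normalizes $\sigma(p)=(1,0,\dots,0)$, invokes Lemma 4.1 of \cite{CGR06} to get that $N_{\tilde p}\tilde X\subset\SS^N(\tilde\rho)$ \emph{is} spherically convex, and then observes that for any $(\xi_1,\dots,\xi_N,-1)\in N_{\tilde p}\tilde X$ with $\xi_1<0$, the geodesic segment on $\SS^N(\tilde\rho)$ joining $\bar\sigma=\tilde\rho(1,0,\dots,0,0)$ to $(\xi_1,\dots,\xi_N,-1)$ — which stays in $N_{\tilde p}\tilde X$ by convexity — passes through the reflected point $(-\xi_1,\xi_2,\dots,\xi_N,-1)$. (This works because $\bar\sigma$ lies in the equator $\{\xi_{N+1}=0\}$ while the target lies in the slice $\{\xi_{N+1}=-1\}$.) This produces the pointwise bound $|N^u_pX/\sigma|\ge\frac12|N^u_pX|$. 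Combined with the fact that the interiors of the $N^u_{x_i}X$ are pairwise disjoint for finite $X$ (a short argument, since two nonempty open overlaps would force $\langle x_2-x_1,\xi\rangle=u(x_2)-u(x_1)$ on an open set of $\xi$) and the approximation of a general compact $X$ by finite subsets, one gets \eqref{eq1.9}. You should make this pointwise convexity-and-reflection step explicit; without it the ``transfer'' is a hope, not a proof.
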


\begin{rem}
 The assumption on the map $\sigma$ is crucial. In the later applications $\nu(p)\in N^u\Gamma$ is in general not true, however, it is
 easy to see that $\tilde \rho (\nu(p), 0 ) \in N_{\tilde p} \tilde \Gamma$ is true, and hence Proposition \ref{prop1.4} can be applied.
\end{rem}

To prove Proposition \ref{prop1.4}, the following observation is crucial.

\begin{lem}
 \label{lem1.6}
 Under the same assumptions as in  Proposition \ref{prop1.4}, there holds
 \begin{equation}\label{eq1.10}
    |N^u_pX\slash \sigma | \ge \frac 12 |N^u_pX|,
 \end{equation}
 for any $p\in X$. 
\end{lem}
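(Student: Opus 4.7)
The plan is to exploit the embedding of Lemma \ref{lem2} together with a reflection argument across the hyperplane $\{\xi \in \R^N : \langle \xi, \sigma(p)\rangle =0\}$. First, I would unpack the hypothesis on $\sigma$. The assumption that $\bar\sigma = \tilde\rho\cdot(\sigma(p),0) \in N_{\tilde p}\tilde X$ means $\langle \tilde x - \tilde p, \bar\sigma\rangle \le 0$ for every $\tilde x \in \tilde X$, and since the last coordinate of $\bar\sigma$ vanishes this collapses to
\begin{equation*}
\langle x - p, \sigma(p)\rangle \le 0 \qquad \text{for all } x \in X.
\end{equation*}
This is the key inequality that will drive the reflection estimate.

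Next, fix $p \in X$ and consider the reflection $R:\R^N \to \R^N$ across the hyperplane orthogonal to $\sigma(p)$, namely
\begin{equation*}
R(\xi) := \xi - 2\langle \xi, \sigma(p)\rangle\,\sigma(p).
\end{equation*}
The map $R$ is a linear isometry (in particular measure-preserving), preserves $\SS^{N-1}(\rho)$, and sends the open half $\{\langle \xi,\sigma(p)\rangle < 0\}$ bijectively onto the open half $\{\langle\xi,\sigma(p)\rangle>0\}$. I would then verify that $R$ maps the ``bad half'' $B := N^u_pX \cap \{\langle\cdot,\sigma(p)\rangle <0\}$ into $N^u_pX$. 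Indeed, for $\xi\in B$ and any $x\in X$,
\begin{equation*}
\langle x-p, R(\xi)\rangle = \langle x-p,\xi\rangle - 2\langle\xi,\sigma(p)\rangle\,\langle x-p,\sigma(p)\rangle.
\end{equation*}
The first term is $\le u(x)-u(p)$ because $\xi \in N^u_pX$; the second term is the product of $-2\langle\xi,\sigma(p)\rangle > 0$ with $\langle x-p,\sigma(p)\rangle\le 0$, hence non-positive. Therefore $\langle x-p, R(\xi)\rangle \le u(x)-u(p)$ for every $x\in X$, which shows $R(\xi) \in N^u_pX$. Since $|R(\xi)|=|\xi|=\rho$, in fact $R(\xi) \in N^u_pX \cap \{\langle \cdot,\sigma(p)\rangle >0\} =: A$.

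Finally, write $N^u_pX = A \sqcup B \sqcup C$, where $C$ is the (measure-zero on $\SS^{N-1}(\rho)$, but we keep it) slice on the hyperplane $\{\langle \cdot,\sigma(p)\rangle=0\}$. Since $R$ is measure-preserving and $R(B)\subset A$,
\begin{equation*}
|B| = |R(B)| \le |A|,
\end{equation*}
and therefore $|N^u_pX/\sigma| = |A|+|C| \ge \tfrac12(|A|+|B|+|C|) = \tfrac12|N^u_pX|$, which is \eqref{eq1.10}. The only point requiring care is the first one: checking that the $\xi_{N+1}=0$ form of $\bar\sigma$ is precisely what is needed so that reflecting $\xi$ in the $\sigma(p)$-direction (in $\R^N$) does not affect the $u$-side of the defining inequality of $N^u_pX$. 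Once this is recognized, the reflection argument is a direct adaptation of the one used in \cite{CGR06} to establish Proposition \ref{prop1.1}, and I expect no further obstacles in this lemma.
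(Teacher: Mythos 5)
Your proof is correct, and it takes a genuinely different (and arguably cleaner) route than the paper's. The paper also uses a reflection argument, but it works entirely in $\R^{N+1}$: after passing to $\tilde X$ and $N_{\tilde p}\tilde X$ via Lemma \ref{lem2}, it invokes Lemma 4.1 of \cite{CGR06} to assert that $N_{\tilde p}\tilde X$ is a spherically convex subset of $\SS^N(\tilde\rho)$, and then argues that the great-circle geodesic from $\bar\sigma$ to any $\tilde\xi\in \tilde B$ passes through the reflected point $\tilde\xi'$, so spherical convexity places $\tilde\xi'$ in $N_{\tilde p}\tilde X$ and hence in $\tilde A$. Your version bypasses the spherical-convexity machinery entirely: you extract from the hypothesis on $\sigma$ the scalar inequality $\langle x-p,\sigma(p)\rangle\le 0$ for all $x\in X$, and then verify by a one-line computation that the reflection $R(\xi)=\xi-2\langle\xi,\sigma(p)\rangle\sigma(p)$ maps the bad half $B$ into $N^u_pX$ directly from the defining inequality of $N^u_pX$, never leaving $\R^N$. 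Both proofs rest on the same reflection idea and yield the same measure estimate $|B|\le|A|$, but yours is self-contained and more elementary, at the modest cost of not making the connection to the normal-cone geometry of $\tilde X$ explicit (which the paper relies on elsewhere, notably in the finite-set Lemma \ref{lem3} and the limiting argument in Proposition \ref{prop1.4}).
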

\begin{proof}
Without loss of generality, we may assume that $\sigma(p)=(1, 0,\cdots, 0) \in\mathbb{S}^{N-1}$ 
and $\bar \sigma=\tilde \rho (1, 0, \cdots, 0, 0)\in N_{\tilde p} \tilde X.$ 
By Lemma 4.1 in \cite{CGR06}, we know that the normal cone  $N_{\tilde p} \tilde X$ is   a convex  spherical set on $\SS^N(\tilde \rho)$. 
By Lemma \ref{lem2}  for any point $\xi \in N^u_pX $, we have $(\xi, -1) \in N_{\tilde p} \tilde X \subset  \SS^N(\tilde \rho)$.  
Hence the geodesic segment connecting $\bar \sigma$ and $(\xi, -1)$ lies entirely in  $N_{\tilde p}\tilde X$. 
Set $A:= N^u_pX\slash \sigma = \{\xi\in N^u_pX\, |\, \xi_1\ge 0 \} $ and $B= N^u_pX \setminus A=
\{\xi\in N^u_pX\, |\, \xi_1< 0  \}$. Let $\tilde A:= A\times \{-1\}$  and $\tilde B:= B\times \{-1\}$ and define $\tilde B'$ be the reflection
of $B$ with respect to the hyperplane $\{\xi_1=0\}$ in $\mathbb{R}^{N+1}$, i.e., $\tilde B' =\{(-\xi_1, \xi_2, \cdots, \xi_N, -1) \,|\, \tilde{\xi}:=(\xi_1, \xi_2, \cdots, \xi_N, -1)\in \tilde B\}$.
We \textbf{claim} that $\tilde B'\subset \tilde A$. Then the Lemma follows from this claim.  In fact, for any $\tilde \xi:=(\xi_1, \xi_2, \cdots, \xi_N, -1)\in \tilde B$, 
by definition $\xi_1< 0$. From the above discussion, we know that the geodesic segment   on $\SS^{N}(\tilde \rho)$
connecting $\bar \sigma $ and $\tilde{\xi}$  lies on $N_{\tilde p}\tilde X$. One can see easily that this segment goes through 
the point $(-\xi_1, \xi_2, \cdots, \xi_N, -1)$ with $-\xi_1\ge 0$. It is clear that it lies in $\tilde A$, and hence  $\tilde B' \subset \tilde A$. Hence we have
\[|A|=|\tilde A| \ge |\tilde B'| = |\tilde B|=|B|.\]
The Lemma follows.
\end{proof}
Now we follow the approach given in \cite{CGR06} to show first that Proposition \ref{prop1.4} is true for a finite set.
\begin{lem} \label{lem3}
  Proposition \ref{prop1.4} is true, if  $X=\{x_1, x_2, \cdots, x_k\}$ is a finite set.
\end{lem}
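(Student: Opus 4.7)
The plan is to reduce the finite case to a disjoint-union argument combined with the pointwise inequality already proved in Lemma \ref{lem1.6}. Three ingredients are needed: (i) the union $N^u X = \bigcup_{p \in X} N^u_p X$ already covers all of $\SS^{N-1}(\rho)$; (ii) the pieces $N^u_p X$ are pairwise disjoint up to a set of $(N-1)$-dimensional Hausdorff measure zero; (iii) at each $p$, Lemma \ref{lem1.6} cuts the measure of the piece in half when one restricts to $\slash \sigma$.

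For (i), given any $\xi \in \SS^{N-1}(\rho)$, the function $f(x) := \langle x,\xi\rangle - u(x)$ on the finite set $X$ attains a maximum at some $p \in X$. Rewriting $f(x) \le f(p)$ as $\langle x-p, \xi\rangle \le u(x)-u(p)$ for every $x \in X$ shows precisely that $\xi \in N^u_p X$, so $N^u X = \SS^{N-1}(\rho)$. Equivalently, via Lemma \ref{lem2} one may view this as the standard argmax argument on $\tilde X$ for the linear functional with direction $(\xi,-1) \in \SS^{N}(\tilde\rho)$. For (ii), if $\xi \in N^u_p X \cap N^u_q X$ with $p \ne q$, then substituting $x=q$ in the condition at $p$ and $x=p$ in the condition at $q$ forces the equality $\langle q-p,\xi\rangle = u(q)-u(p)$. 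Hence the intersection lies in $\SS^{N-1}(\rho) \cap \{\xi: \langle q-p,\xi\rangle = u(q)-u(p)\}$, a spherical slice by a fixed affine hyperplane, which has $(N-1)$-Hausdorff measure zero. Since $X$ is finite, the union of all pairwise overlaps is again measure-zero, and the same holds a fortiori for the restricted cones $N^u_p X \slash \sigma$.

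Combining these with Lemma \ref{lem1.6} at each $p \in X$ then gives
\[
|N^u X \slash \sigma| \;=\; \sum_{p\in X} |N^u_p X \slash \sigma| \;\ge\; \frac{1}{2} \sum_{p\in X} |N^u_p X| \;=\; \frac{1}{2}\,|N^u X| \;=\; \frac{1}{2}\,|\SS^{N-1}(\rho)|,
\]
which is exactly the conclusion of Proposition \ref{prop1.4} in the finite setting.

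The step requiring the most care is (ii): one has to be sure that the equality $\langle q-p,\xi\rangle = u(q)-u(p)$ really does cut out a spherical hypersurface (i.e., that the affine hyperplane meets $\SS^{N-1}(\rho)$ in codimension one and not in a full open piece of the sphere), but since $p \ne q$ the direction $q-p$ is nonzero, so the hyperplane is proper and the intersection has the expected measure zero. The rest of the argument is measure-theoretic bookkeeping; the real geometric content has already been absorbed into Lemma \ref{lem1.6} via the lifting trick to $\R^{N+1}$.
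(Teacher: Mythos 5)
Your proposal is correct and follows essentially the same route as the paper: first show $N^u X = \SS^{N-1}(\rho)$ via the argmin/argmax of $u(\cdot)-\langle\cdot,\xi\rangle$ on the finite set $X$, then establish that the cones at distinct points overlap only in a negligible set, and finally sum Lemma \ref{lem1.6} over $X$. The only difference is cosmetic: the paper argues that $\mathrm{int}(N^u_{x_i}X)\cap\mathrm{int}(N^u_{x_j}X)=\emptyset$ (and then tacitly uses that the boundaries of these spherically convex sets have measure zero), whereas you observe directly that $N^u_p X\cap N^u_q X$ is contained in the hyperplane slice $\{\xi:\langle q-p,\xi\rangle=u(q)-u(p)\}\cap\SS^{N-1}(\rho)$, which is self-evidently $\mathcal H^{N-1}$-null; both come down to the same identity forced by exchanging $x=p$ and $x=q$ in the defining inequalities, and your phrasing is slightly more self-contained.
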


\begin{proof} First of all, it is easy to see that
\begin{equation}\label{eq2.1}
  N^uX= \SS^{N-1}(\rho).
\end{equation}
In fact, by definition, we have $N^uX\subset \SS^{N-1}(\rho).$
We only need to check   $\SS^{N-1}(\rho) \subset N^uX.$
Note that for any fixed  $\xi \in \mathbb{S}^{N-1}(\rho)$, the function $u(y) -\langle y,\xi\rangle$ 
	attains its minimum at a certain point $p\in X$, for $X$ is a finite set. Namely
	\[u(p) -\langle p,\xi\rangle \le  u(y) -\langle y,\xi\rangle \quad \forall y\in X,\]
	which is equivalent to  $\xi \in N^u_{p} X$, and hence  $\SS^{N-1}(\rho) \subset N^uX.$ Namely, \eqref{eq2.1} holds.

Now we \textbf{claim} that
$ {\mbox{int}(N^u_{x_i}X)\cap \mbox{int}(N^u_{x_j}X)=\emptyset}$ for any $i\neq j$. 
If not,  we may assume that there is an open set  of $U$ such that $U\subset \mbox{int}(N^u_{x_1}X)\cap \mbox{int}(N^u_{x_2}X) $. 
For each $\xi\in U$ we have by definition
\[ \langle y-x_i,\xi \rangle \le  u(y)-u(x_i)\quad \text{ for } y\in X.\]
For $i=1$, by choosing $y=x_2$ in the above inequality we have
\[ \langle x_2-x_1,\xi \rangle \le  u(x_2)-u(x_1).\]
For $i=2$, we choose $y=x_1$ and obtain another inequality. Both together give us
\[ \langle x_2-x_1,\xi \rangle = u(x_2)-u(x_1),\]
which is true for a non-empty open set $U$. It is clear that this is impossible.

From the \textbf{claim}, the previous Lemma and \eqref{eq2.1}  we can complete the proof of the Lemma
		\begin{align*}
		|N^uX/\sigma | &= 	| \bigcup_{j=1}^k N^u_{x_j}X/\sigma |
		\\&\geq  \sum_{j=1}^k\frac{1}{2}|  N^u_{x_j}X|=\frac{1}{2}
		|\bigcup_{j=1}^k N^u_{x_j} X  |=\frac{1}{2}  |N^uX| \\
		&=\frac{1}{2}|\mathbb{S}^{N-1}(\rho)|.
		\end{align*}
\end{proof}                               

Then we can finish the proof of Proposition \ref{prop1.4}.

\

\noindent{\it \textbf{Proof of Proposition \ref{prop1.4}}.} Now one can follow closely the ideas given in \cite{CGR06} to finish the  proof of  the Proposition.
For the convenience of the reader, we sketch the ideas of proof. As above, we consider the graph $\tilde X$ of $X$ and $N^{\tilde \rho} \tilde X$.
First, since $u$ is continuous,  $\tilde X$ is also compact. One can  show that $N \tilde X /\bar \sigma$ is  closed and 
hence  $N^uX/\sigma$ is also
closed.
Then,  for any integer $i$, $\tilde X$ is covered by finitely many balls in $\R^{N+1}$ of
radius $1/i$ centered at points of $\tilde X$. Let $\tilde X_i$ be the set of the centers and $X_i\subset X$ its projection into $\R^N$ by forgetting the last coordinate 
$\xi_{N+1}$. 
It is clear that $\tilde X_i$ converges to $\tilde X$  ($X_i$ converges to $X$ resp.) in the Hausdorff distance sense.
In view of the remark that $\tilde X$ is also disjoint from the interior of its convex hull, we can apply the same proof as in \cite{CGR06} to conclude
that $N_{\tilde p} \tilde X_i$ converges to $N_{\tilde p} \tilde X$ in the Hausdorff distance sense,
for any $\tilde p\in \tilde X$. 
It follows that $N^uX_i$ converges to $N^uX$, for $N^uX_i = N \tilde X_i\cap \{\xi_{N+1}=-1\}$ by Lemma \ref{lem2}.
Since $\sigma$ is continuous, it follows that $N^uX_i/\sigma$
converges to $N^uX/\sigma$ in the Hausdorff distance sense. Finally,  in view of Lemma \ref{lem3} and  the fact that 
$N^uX/\sigma$ is closed, the contradiction argument as in \cite{CGR06} completes the proof.

\qed

 \section{A new proof of the relative isoperimetric inequality} \label{sec2}

 In this section, in order to well present
our methods and ideas,  we give a new proof for the relative isoperimetric inequality of domains,  Theorem 
\ref{thm1.4}, by using Proposition \ref{prop1.4} and the ABP method.
The original proof given by Choe-Ghomi-Ritor\'e in \cite{CGR07} uses Proposition \ref{prop1.1} proved in \cite{CGR06} and the minimization of the relative isoperimetric domain.

 As mentioned in the Introduction, Cabr\'e gave a simply proof of the classical \iqq, by using 
the technique introduced by Alexandrov, Bakelman, and Pucci to establish the ABP estimate. 
We generalize Cabr\'e's idea to provide 
a new proof of the relative isoperimetric inequality. Namely we provide a boundary version of his proof. 



Let $\mathcal{C}\subset \mathbb{R}^{N}$ be an open convex body in $\mathbb{R}^{N}$ with a smooth boundary
 $S=\p \C$. Let $M:=\Omega \subset  \RR^ N$ be a bounded domain outside $\C$ with its 
boundary $\P \Omega$ consisting of two smooth pieces $\Sigma$ and $\Gamma$, 
where $\Gamma\subset \p \C$. Both $\Sigma$ and $\Gamma$ can be non-connected and $\Gamma$ is closed and hence compact. 
Their common boundary is denoted by $\p \S$. Let $\nu$ be the unit outward normal vector field of $\p \Omega$
 and $\nu_S$ be  the unit outward normal vector field of $S$. It is clear that $\nu=-\nu_S$ along $\Gamma$. (Note that in this Section
 we consider the case of codimension $0$.
 In the higher codimensional case, $\nu=-\nu_S$ along $\Gamma$ is the free boundary condition.)
 
We assume first that 
\begin{equation}\label{orth}  \Sigma \hbox{ intersects } S \hbox{ orthogonally.}
 \end{equation}
 This is equivalent to that  $\Sigma \hbox{ intersects } \Gamma \hbox{ orthogonally.}$ (For the general case, we will use a simply approximation argument to reduce to this 
 case. See the proof below.)
Under this assumption we consider the following problem
\begin{eqnarray}\label{3.2a}
 \Delta u &=& \frac {|\Sigma|}{|\Omega|},  \quad \hbox { in }\Omega,\\
 \frac {\p u}{\p \nu}&=& 1, \quad \hbox { in }\Sigma,\\
 \frac {\p u}{\p \nu}&=& 0, \quad \hbox { in }\Gamma\backslash \p \Sigma.\label{3.4b}
\end{eqnarray}
The existence of a weak solution is easy to show. Due to the Neumann condition on $\Gamma$ and the orthogonality in \eqref{orth} one can show that
$u\in C^{1,\a}(\overline \O)\cap C^\infty_{loc}(\overline \O \backslash \p\Sigma)$ for some $\alpha\in (0,1)$.

\begin{rem} It is easy to give a weak formulation to problem \eqref{3.2a}-\eqref{3.4b} and obtain its
weak solution $u$. By a standard elliptic method, one can show that $u\in C^\infty_{loc}(\overline{\O}\setminus\p \S)$. A
regularity problem of $u$ might occur along the “corner” $\p \S$. However, due to \eqref{orth} a
reflection argument given for instance in \cite{GJ} provides a proof for $u\in C^1(\overline\O)$. In fact,
after a reflection along $\Gamma$, we obtain a domain with a $C^1$ boundary $\tilde \Sigma$ which contains $\p \S$
in its interior and a small portion of $\Sigma$ and its reflection. $u$ can be also reflected so that
the resulting $u$ satisfies an elliptic equation with $C^1$ coefficients weakly. Moreover, on $\tilde\Sigma$ 
we have continuous oblique boundary conditions so that we can use the results in \cite[Section 4.1]{Lie} to get the $C^1$ estimate. This approach works also for problem  \eqref{3.3}-\eqref{3.5}
below.
\end{rem}
Without loss of generality, by scaling, we may assume that\[ \frac {|\Sigma|}{|\O|}=N.\]
Now we define its lower contact set, as in the  ABP method, by
\[\Gamma_+ :=\{x\in \Omega \,|\,  u(y) \geq  u(x) + \langle \n u(x) , y - x\rangle  \quad \forall \,  y \in \Omega\}.\]
If we can prove that
\begin{equation}\label{eq1.1}
 | \n u(\Gamma_+) | \ge \frac 12  |\mathbb{B}^N|,
\end{equation}
then we can follow the proof of Cabr\'e \cite{Cabre2008}. See at the end of this section.
One might hope that $$\n u(\Gamma_+) \hbox{ contains a half unit ball,}$$
which obviously implies \eqref{eq1.1}. Unfortunately, this is in general not true. 
To overcome this difficulty, for any $\rho\in (0, \infty)$, we consider following sets
\[
 \Gamma^\rho_+ :=\{x\in \Gamma_+\,|\, |\n u(x)|<\rho\} \quad \hbox{ and } \quad  \p\Gamma^\rho_+ :=\{x\in \Gamma_+\,|\, |\n u(x)|=\rho\}.
\]
We want to prove 
\begin{equation}\label{eq1.2}
 |\n u (\p\Gamma^\rho_+)| \ge \frac 12 |\SS^{N-1}(\rho)|, \qquad \qquad\forall \rho\in (0,1).
\end{equation}
Then \eqref{eq1.1}  clearly follows from  \eqref{eq1.2}.  We use Proposition \ref{prop1.4} to prove the area estimate \eqref{eq1.2}.

\begin{prop}\label{slice area estimate for domains}
 \eqref{eq1.2} is true, namely \[
 |\n u (\p\Gamma^\rho_+)| \ge \frac 12 |\SS^{N-1}(\rho)|, \qquad \qquad\forall \rho\in (0,1).\]
 It follows that 
 \[|\n u (\Gamma^1_+)| \ge \frac 12 |\mathbb{B}^N|, \]
 and hence \eqref{eq1.1} is true.
\end{prop}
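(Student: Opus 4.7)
The plan is to execute an ABP-type argument in the spirit of Cabr\'e, invoking Proposition \ref{prop1.4} to quantify the portion of $\mathbb{S}^{N-1}(\rho)$ covered by $\nabla u$ at the free-boundary piece $\Gamma$. Fix $\rho\in(0,1)$ and, for each $\xi\in\mathbb{S}^{N-1}(\rho)$, introduce the auxiliary function $v_\xi(y):=u(y)-\langle y,\xi\rangle$; by compactness, $v_\xi$ attains its minimum on $\bar\Omega$ at some point $p_\xi$.

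First I would dispose of the case $p_\xi\in\Sigma$. Combining the Neumann condition $\partial u/\partial\nu=1$ with the one-sided optimality inequality $\nabla v_\xi(p_\xi)\cdot\nu\le 0$ forces $1\le\xi\cdot\nu\le|\xi|=\rho<1$, a contradiction. If $p_\xi$ lies in the interior of $\Omega$, then $\nabla u(p_\xi)=\xi$, the Hessian of $u$ at $p_\xi$ is non-negative, and the supporting affine hyperplane lies below $u$ on $\Omega$; thus $p_\xi\in\partial\Gamma_+^\rho$ and $\xi\in\nabla u(\partial\Gamma_+^\rho)$. The only remaining case is $p_\xi\in\Gamma$, in which the free-boundary condition $\nabla u\cdot\nu=0$ together with the inward-derivative inequality yields $\xi\cdot\nu_S(p_\xi)\le 0$ and $\xi\in N^u_{p_\xi}\Gamma$.

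Next I would apply Proposition \ref{prop1.4} with $X=\Gamma$ and $\sigma(p):=\nu_S(p)$. The convexity of $\mathcal{C}$ gives $\langle x-p,\nu_S(p)\rangle\le 0$ for every $x\in\Gamma\subset\partial\mathcal{C}$, so the lifted vector $\bar\sigma=\tilde\rho(\nu_S(p),0)$ lies in $N_{\tilde p}\tilde\Gamma$ and the hypothesis of Proposition \ref{prop1.4} is met; the conclusion delivers $|N^u\Gamma/\sigma|\ \geq\ \tfrac{1}{2}|\mathbb{S}^{N-1}(\rho)|$. The crux is now the almost-everywhere inclusion $N^u\Gamma/\sigma\subset\nabla u(\partial\Gamma_+^\rho)$. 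Suppose $\xi\in N^u_p\Gamma$ with the \emph{strict} inequality $\xi\cdot\nu_S(p)>0$. The directional derivative of $v_\xi$ into $\Omega$ at $p$ equals $\nabla v_\xi(p)\cdot\nu_S(p)=-\xi\cdot\nu_S(p)<0$ by the Neumann condition, so $v_\xi$ drops strictly below $v_\xi(p)$ at nearby interior points $q\in\Omega$. Were the $\bar\Omega$-minimum $p_\xi$ attained on $\Gamma$, the normal-cone property $\xi\in N^u_p\Gamma$ would force $v_\xi(p_\xi)\ge v_\xi(p)$, contradicting $v_\xi(p_\xi)\le v_\xi(q)<v_\xi(p)$. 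Hence $p_\xi$ lies in the interior and $\xi\in\nabla u(\partial\Gamma_+^\rho)$. The residual locus, on which every witness $p$ to $\xi\in N^u\Gamma/\sigma$ actually satisfies $\xi\cdot\nu_S(p)=0$, is cut out by a transverse codimension-one condition and is a null set in $\mathbb{S}^{N-1}(\rho)$, so modulo measure zero the inclusion holds and $|\nabla u(\partial\Gamma_+^\rho)|\ge\tfrac{1}{2}|\mathbb{S}^{N-1}(\rho)|$, which is \eqref{eq1.2}.

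The second claim then follows by polar integration (the coarea formula for the radial function $x\mapsto|x|$):
\[
|\nabla u(\Gamma_+^1)|\ =\ \int_0^1|\nabla u(\partial\Gamma_+^\rho)|\,d\rho\ \geq\ \int_0^1\tfrac{1}{2}|\mathbb{S}^{N-1}(\rho)|\,d\rho\ =\ \tfrac{1}{2}|\mathbb{B}^N|.
\]
The main obstacle is the crux step: verifying rigorously that the borderline set $\{\xi:\xi\cdot\nu_S(p)=0\text{ for every admissible }p\}$ contributes no measure to $N^u\Gamma/\sigma$. This will hinge on the smoothness of $S$ and the continuity of the Gauss map along $\Gamma$, together with a transversality argument showing that the ``equality locus'' is locally cut out by an extra non-degenerate equation on top of the ones defining $N^u_p\Gamma$.
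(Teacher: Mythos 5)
The overall strategy — applying Proposition \ref{prop1.4} with $X=\Gamma$ and $\sigma=\nu_S$, then establishing the inclusion $N^u\Gamma/\nu_S\subset\nabla u(\partial\Gamma_+^\rho)$ — is precisely the paper's, and your handling of the cases $p_\xi\in\Sigma$ and $p_\xi$ interior is correct. However, there is a genuine gap exactly where you flag it: you do not prove the measure-zero claim for the ``equality locus'' $\{\xi : \xi\cdot\nu_S(p)=0 \text{ for every witness } p\}$, and no transversality argument is supplied. The claim is not obviously true in general, and, more to the point, it is unnecessary.

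The paper treats the borderline case directly rather than excluding it. Suppose $\xi\in N^u_p\Gamma/\nu_S$ and the global minimum of $f(x)=u(x)-\langle x,\xi\rangle$ over $\overline\Omega$ is attained at $p$ itself (the only case not already covered by your argument). Since $p$ minimizes $f$ on $\Gamma$, the tangential gradient of $f$ along $\Gamma$ vanishes at $p$; since $p$ is a global minimum, $\frac{\partial f}{\partial\nu}(p)\le 0$. On the other hand, the Neumann condition $\frac{\partial u}{\partial\nu}=0$ on $\Gamma$ and $\nu=-\nu_S$ give
\[
\frac{\partial f}{\partial\nu}(p)=\frac{\partial u}{\partial\nu}(p)-\langle\xi,\nu(p)\rangle=\langle\xi,\nu_S(p)\rangle\ge 0,
\]
where the last inequality is the defining property of the restricted cone. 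Hence $\frac{\partial f}{\partial\nu}(p)=0$, so $\nabla f(p)=0$, i.e.\ $\nabla u(p)=\xi$, and the global minimality of $p$ puts $p$ in $\Gamma_+$. Thus $\xi\in\nabla u(\partial\Gamma_+^\rho)$ even in the equality case, and one obtains the exact (not merely a.e.) inclusion $N^u\Gamma/\nu_S\subset\nabla u(\partial\Gamma_+^\rho)$. You should replace the measure-theoretic detour by this two-line argument; without it the proof is incomplete.
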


\begin{proof} Let $\rho\in (0,1)$ be fixed. 
Let $X:=\Gamma$. Since $S$ is by assumption convex, $\Gamma$ is disjoint from the interior of its convex hull.
Let $u:\Gamma \to\R$ be the restriction of $u$ and $\sigma:\Gamma\to \SS^{N-1}$  be the outer unit normal $\nu_S$ of $S$ along $\Gamma$. 
We first check that
$\bar \sigma:=\tilde \rho (\nu_S, 0)\in \SS^{N}(\tilde \rho)$ is an element of $N_{\tilde p} \tilde \Gamma$ for any $p\in \Gamma$. That is, we need to show
that
\[
 \<\tilde x-\tilde p,\bar \sigma (p)\> \le 0, \quad \forall x \in \Gamma.
\]
This is certainly equivalent to
\[
 \<x- p, \nu _S(p)  \> \le 0, \quad \forall x \in \Gamma,
\]
which is true, due to the convexity of $S$. Hence from  Proposition \ref{prop1.4} we have
\begin{equation}
 \label{a1}
 |N^u\Gamma/\nu_S| \ge \frac 12 |\mathbb{S}^{N-1}(\rho)|, \qquad \forall \rho\in (0,1).
\end{equation}
Now we \textbf{claim} that
\begin{equation}\label{a2}
 \n u (\p \Gamma_+^\rho)\supset N^u\Gamma /\nu_S.
\end{equation}
Then the first statement of the Proposition follows clearly from this claim and \eqref{a1}. 
It remains to  prove the claim.
For any $\xi\in 	N_p^u\Gamma/\nu_S $, we have by definition
	\begin{eqnarray}\label{b1}
	\langle x-p,\xi\rangle  &\leq&   u(x)-u(p),  \qquad \forall x\in\Gamma,\\
	  	\langle \xi,\nu_S(p)\rangle &\geq & 0. \label{b2}
	\end{eqnarray}	
	Define a function $f:\Omega \to \R$ by
		$$f(x)=u(x)-\<x,\xi\>.$$
			\eqref{b1} means that  $p$ is a minimum point of $f$  on $\Gamma$. We have two cases:
			either i) $p$ is a   minimum point of $f$  in the whole $\overline\O$, or ii) 
			$p$ is not a   minimum point of $f$  in the whole $\overline \O$.
			
			We first consider case ii). In this case, there exists another point $q\in \overline \O \backslash \Gamma$ such that $f(q)=\min\limits_{x\in \overline \O} f(x)$.
			If $q\in \S$, then by the definition of $u$ we have
			\[\frac {\p f} {\p\nu} (q) =\frac {\p u} {\p\nu} (q)-\<\nu(q), \xi\> \ge  1- \rho >0,\]
			for $|\xi|=\rho.$ This is impossible. Hence $q\in \O$ and $\n f(q)=0$,
			which implies that $\xi =\n u (q)$. Since $q$ is a minimum point of $f$ in $\overline \O$,
			it is  easy to see that $q\in \Gamma_+$, and hence $\xi\in \n u (\partial \Gamma^\rho_+).$
			
	Now we consider case i). In this case, we know all tangential derivatives of $f$ along $\Gamma$ vanish and $\frac {\p f}{\p \nu} (p) \le 0.$	However, 
by	using  \eqref{b2} and 
 $\nu=-\nu_S$ along $\Gamma$, it yields that
	\[
	0\geq  \frac {\p f}{\p \nu}(p)=  \frac {\p u}{\p \nu}(p)-\<\nu(p), \xi\>=\<\nu_S(p), \xi\> \ge 0.
	\]Hence $\frac {\p f}{\p \nu}(p)= 0$, and hence $\n f (p)= 0$, which implies that $\xi=\n u (p)$.
	The minimality of $p$ then implies that $p\in \Gamma_+$. It follows that $\xi \in \n u (\p\Gamma_+^\rho)$. The claim holds.

	The second statement follows from
\[
 |\n u (\Gamma^1_+)|=\int_0^1 |\n u (\p\Gamma^\rho_+)| d\rho \ge \frac 12 |\SS^{N-1}| \int_0^1 \rho^{N-1}d\rho 
 =\frac 12  \frac 1N|\SS^{N-1}| = \frac 12 |\mathbb{B}^N|.
\]
	
\end{proof}
Now we can finish the proof of the \riq of Choe-Ghomi-Ritor\'e \cite{CGR07}, Theorem \ref{thm1.4}.

\

\noindent{\it \textbf{Proof of Theorem \ref{thm1.4}}.} If $\O$ satisfies Assumption \eqref{orth}, we consider the function $u$ defined by (2.1)-(2.3).
From the above discussions, we have
\begin{eqnarray*}
 \frac 12  |\mathbb{B}^N| &\le & | \n u(\Gamma^1_+) | \le \int _{\n u(\Gamma^1_+)} dx  \\
                 & \le  &  \int _{\Gamma^1_+}  \det \n^2 u(x) dx \le \int _{\Gamma^1_+}  \left(\frac {\Delta u}N \right)^N  dx \\
                 &\le& |\O |= \frac 1 {N^N} \left(\frac {|\Sigma|^N}{|\Omega|^{N-1}}  \right),
\end{eqnarray*}
recalling that $\frac{|\S| }{|\O|}=N$.
This is the optimal \riqq. 

If  $\O$ does not satisfy Assumption \eqref{orth}, one can use an approximation argument. It is not difficult to see that for any $\epsilon>0$, one can construct a domain 
$\O_\epsilon$ as above satisfying  Assumption \eqref{orth} such that the difference between the volumes of $\O$ and $\O_\epsilon$
and the difference between the areas  of their relative boundaries are smaller than $\epsilon$. The isoperimetric inequality holds for $\O_\epsilon$
, which implies the isoperimetric inequality
for $\O$.

Now we consider the equality case. Assume that $\O$ with boundary $\Sigma$ and $\Gamma$  achieves the equality.
Such a domain is called a  relative isoperimetric domain. By the first variational formulas for the area and the volume, it is easy to
see that the relative boundary $\Sigma$ intersects the support surface  $S$ orthogonally, i.e., Assumption \eqref{orth} holds true. For a proof see 
\cite{RV} or the Appendix.
Hence we can define $u$ the  solution  of (2.1)-(2.3)
and carry on the argument presented above to obtain the above inequality. Now by the assumption that $\O$ achieves in fact equality, we have 
$|\Gamma^1_+|=|\O|$ and 
\[\n^2 u=  I \quad  \hbox { on }\Gamma^1_+,\]
where $I$  is the identity map. Let $x_0$ be a minimum point of $u$ in $\overline\O$. By the definition of $u$, we know that $x_0$ can not be on $\Sigma$. Hence, 
either $x_0\in \Omega$ or $x_0\in \Gamma$. In the both cases, we have $\n u(x_0)=0$. Without loss of generality, assume that $x_0=0$ and $u(0)=0$. Then it follows that
\[u(x)=\frac 12 |x|^2.\]
 Now it is easy to see that $\O \subset \mathbb{B}^N$ and $\S\subset \SS^{N-1}$, for $\frac {\p u}{\p \nu} (x)=1$, 
for any $x\in \Sigma$. Since the origin $0$ is either  outside of the convex body or on its boundary $S$, 
there exists a hyperplane through the origin $0$, which does not intersect the interior of the convex body. It divides the unit ball into two half balls.
It follows that one of the half balls is contained entirely in $\O$. Since the volume of $\O$ is the same as the volume of a  unit half  ball, $\O$ must be the
unit half ball. Hence we finish the proof. \qed

 \section{\riq for minimal submanifolds} \label{sec3}
 
 In this Section, we consider the higher codimensional cases and prove Theorem \ref{thm1.2}, the \riq for submanifolds in the Euclidean space.
 
Let $\mathcal{C}\subset \mathbb{R}^{N}$ be  an open convex body in $\mathbb{R}^{N}$ with a smooth boundary
 $S=\p \C$. 
 Let $M \subset \RR^N$ be an $n$-dimensional submanifold with codimension $m=N-n$. Its boundary $\p M$
 consists two smooth pieces $\Sigma$ and $\Gamma$, where $\Gamma\subset \p \C$ and closed. Denote their common boundary by $\p\Sigma$, which may be empty.
 Let $\nu$ be the outer unit normal vector field of $\p M \subset M$
 and $\nu_S$ be  the outer unit normal vector field of $S$. We assume that  $M$ is a partially free boundary submanifold 
 with free boundary $\Gamma$ on the support $S$, i.e.,
 $\nu=-\nu_S$ along $\Gamma$.
 
 First, by  scaling we may assume that
 \begin{align}\label{relative comptiable condition}
|\Sigma|+\int_M |H|dv=n |M|.
 \end{align}
 As in Section \ref{sec2},
 we first consider the case \begin{align}\label{orthogonal 2}
  \Gamma \text{ meets }\Sigma \text{ orthogonally along } \Sigma\cap \Gamma, 
 \end{align}
 and the following problem 
 \begin{eqnarray}\label{3.3}
 \Delta u &=& n-|H|,  \quad \hbox { in } M,\\\label{3.4}
 \frac {\p u}{\p \nu}&=& 1, \quad \hbox { in }\Sigma,\\\label{3.5}
 \frac {\p u}{\p \nu}&=& 0, \quad \hbox { in }\Gamma\setminus \partial\Sigma,
 \end{eqnarray}
 where $\nu$ is the unit outward normal vector field of $\partial M$ in $M$. As above we can show that there exists a solution
 $u\in C^{1,\alpha}(\overline M)\cap C^2_{loc}(\overline M \backslash \p \Sigma)$ solving equations \eqref{3.3}-\eqref{3.5} for some $\alpha\in (0,1)$.
 
 For any $x\in M$, let $T_xM$  and $T^\perp_x M$ be the tangential space and normal space of $M$ at $x$ respectively.
 Let $\Pi$ be the second fundamental form  of $M$, which is defined by
 $\<\Pi (X,Y), V\> =\<\bar D_XY, V\>$, for any $X, Y \in T M$ and $V\in T^\perp M.$ Here $\bar D$  
 is  the standard connection  in $\R^N$. We use $\n$ to denote the connection on $M$ w.r.t. the induced metric $g$.
 
 Following Brendle \cite{Brendle2019} we define
 \begin{eqnarray*}
 & &U:=\{x\in M\setminus \Sigma|\quad |\nabla u|(x)<1\}\subset M,\\
 &&\Omega:=\{(x,y)\in (M\setminus  {\Sigma})\times T_x^\perp M|\quad  |\nabla u|^2(x)+|y|^2< 1\},
 \\
& & A:=\{(x,y)\in  \O|  {\nabla^2 u(x)-\langle \Pi_x,y\rangle\geq 0}\},
 \end{eqnarray*}
 and 
 \begin{eqnarray*}
 \Phi:\quad  \Omega &\to & \mathbb{R}^{N},\\
 (x,y)&\mapsto& \nabla u(x)+y.
 \end{eqnarray*}
 It is clear that  $|\Phi(x,y)|^2=|\nabla u|^2(x)+|y|^2.$ The following statements was proved in \cite[Lemma 5 and Lemma 6]{Brendle2019}.

 \begin{lem}\label{lem3.1}
 \
 	\begin{enumerate}
 		\item  	For any $(x,y)\in \O$, the Jacobian determinant of $\Phi$ satisfies
 		$
 	\det	( \hbox{\rm Jac}\, \Phi)(x,y)=\det\big( (\nabla^2 u)(x)-\langle \Pi_x,y\rangle\big).
 		$
 		\item  For any $(x,y)\in A$, the Jacobian determinant of $\Phi$ satisfies
 		$
 		0\leq \det(\mbox{\rm Jac}\, \Phi)(x,y)\leq 1.
 		$
 		In particular, if $\det(\mbox{\rm Jac} \,  \Phi)(x,y)= 1$ at point $(x,y)$, then 
 		$\n^2 u(x)-\langle \Pi_x,y\rangle=g_x$, where $g_x$ is the induced metric $g$ at $x$.
 	\end{enumerate}
 \end{lem}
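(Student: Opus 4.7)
The plan is to compute $d\Phi$ explicitly in adapted orthonormal frames, and then bound its determinant using the PDE \eqref{3.3} together with Cauchy--Schwarz and the arithmetic--geometric mean inequality. Fix $(x_0, y_0) \in \Omega$, pick an orthonormal frame $e_1, \ldots, e_n$ of $TM$ near $x_0$ with $\nabla_{e_i} e_j = 0$ at $x_0$, and an orthonormal frame $\nu_1, \ldots, \nu_m$ of $T^\perp M$ with $\nabla^\perp_{e_i} \nu_\alpha = 0$ at $x_0$. These furnish local coordinates $(x^i, y^\alpha)$ on the total space of $T^\perp M$ via $y = \sum_\alpha y^\alpha \nu_\alpha(x)$, and the combined frame $\{\partial_{x^i}, \partial_{y^\alpha}\}$ is orthonormal at $(x_0, y_0)$, so the Jacobian determinant computed with respect to these bases equals the usual one. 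Using the Gauss and Weingarten formulas one then finds $\partial \Phi / \partial y^\alpha = \nu_\alpha$ and
\begin{equation*}
\frac{\partial \Phi}{\partial x^i} = \bar D_{e_i}(\nabla u) + \sum_\alpha y_0^\alpha\, \bar D_{e_i}\nu_\alpha = \sum_j \bigl[\nabla^2 u - \langle \Pi, y_0\rangle\bigr](e_i, e_j)\, e_j + \sum_\beta \langle \Pi(e_i, \nabla u), \nu_\beta\rangle\, \nu_\beta
\end{equation*}
at $x_0$ (the $\nabla^\perp\nu_\alpha$ contributions drop out by the choice of frame). In the basis $\{e_j, \nu_\beta\}$ of the target, the matrix of $d\Phi$ is block triangular with top-left block $\nabla^2 u(x_0) - \langle \Pi_{x_0}, y_0\rangle$, zero upper-right block, and bottom-right block the $m\times m$ identity, so its determinant is $\det\bigl(\nabla^2 u(x_0) - \langle \Pi_{x_0}, y_0\rangle\bigr)$; this proves (1).

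For (2), write $B := \nabla^2 u - \langle \Pi, y\rangle$, a symmetric operator on $T_xM$. On $A$ we have $B \ge 0$, so $\det(\mathrm{Jac}\,\Phi) \ge 0$. Taking the trace of $B$ in the orthonormal frame and invoking the PDE gives $\tr(B) = \Delta u - \langle H, y\rangle = n - |H| - \langle H, y\rangle$. By Cauchy--Schwarz, $\langle H, y\rangle \ge -|H||y|$, and in $\Omega$ one has $|y|^2 < 1 - |\nabla u|^2 \le 1$, hence $\tr(B) \le n - |H|(1-|y|) \le n$. The arithmetic--geometric mean inequality applied to the nonnegative eigenvalues of $B$ then yields $\det B \le (\tr(B)/n)^n \le 1$. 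Equality $\det B = 1$ forces equality in AM--GM (all eigenvalues of $B$ equal) together with $\tr(B) = n$, so every eigenvalue equals $1$ and therefore $B = g_x$.

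The main obstacle is the bookkeeping in the Jacobian computation: one must carefully split $\bar D_{e_i}(\nabla u)$ and $\bar D_{e_i}\nu_\alpha$ into their tangential and normal components via Gauss and Weingarten, and exploit the fact that $\partial \Phi/\partial y^\alpha = \nu_\alpha$ is purely normal in order to produce the vanishing upper-right block. This is precisely what makes the determinant depend only on the tangential operator $\nabla^2 u - \langle \Pi, y_0\rangle$ and removes any dependence on the normal-bundle connection terms. Once (1) is in place, (2) is a purely algebraic consequence of the PDE \eqref{3.3} together with the constraint $|\nabla u|^2 + |y|^2 < 1$ defining $\Omega$.
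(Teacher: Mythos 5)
Your proof is correct and follows essentially the same route as Brendle's original argument (the paper only cites \cite{Brendle2019} for this lemma rather than reproving it): the block-lower-triangular Jacobian in an adapted frame with $\nabla^{\perp}\nu_\alpha=0$ at the base point gives part~(1), and the trace computation $\tr B = n - |H| - \langle H, y\rangle \le n - |H|(1-|y|) \le n$ combined with AM--GM on the nonnegative eigenvalues gives part~(2) and the rigidity statement.
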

 As in the codimension $0$ case, we can not hope that $\Phi(A)$ contains a half unit ball.
 For our use, we set
 \begin{eqnarray*}
\p A_\rho&:=& \{(x,y) \in \O : |\nabla u|^2(x)+|y|^2= \rho^2 \, \hbox{ and } \,
{\nabla^2 u(x)-\langle \Pi_x,y\rangle\geq 0}\}.
\end{eqnarray*}
It is clear that $A=\cup_{\rho \in (0,1)} \p A_\rho.$

 Now we prove the following Proposition by using the ideas given in the previous Section.
 
 \begin{prop}\label{slice area estimate} For any $\rho\in (0,1)$, there holds
 	\begin{align*}
|\Phi(\p A_\rho)|\geq \frac{1}{2}	|\mathbb{S}^{N-1}(\rho)|.
 	\end{align*}
 \end{prop}
 \begin{proof}
  Let us consider the function $u$ defined by \eqref{3.3}-\eqref{3.5}
  and define the generalized normal cone $N^u \Gamma$ of length $\rho\in (0,1)$ as in the previous section.
  Due to the free boundary condition and the convexity of the support hypersurface $S$, one can check easily as in the previous Section
that Proposition \ref{prop1.4}
  can be applied to  our current case. Hence we have
  \[ |N^u \Gamma/\nu_S|\ge \frac 12 |\SS^{N-1}(\rho)|.
   \]
Therefore, the Proposition follows from the next Lemma.
 \end{proof}

 \begin{lem}\label{slice inclusion the cone}For any $\rho\in(0,1)$, there holds
 	\begin{align*}
 	\Phi( \p  A_\rho)\supset N^u\Gamma/\nu_S.
 	\end{align*}
 \end{lem}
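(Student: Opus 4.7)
The strategy is to follow closely the argument establishing the inclusion \eqref{a2} in the codimension-zero case (see the proof of Proposition \ref{slice area estimate for domains}), with the gradient map $\nabla u$ replaced by Brendle's map $\Phi(x,y) = \nabla u(x) + y$. Fix $\xi \in N_p^u \Gamma / \nu_S$ with $|\xi| = \rho < 1$, so that $\langle x - p, \xi \rangle \leq u(x) - u(p)$ for all $x \in \Gamma$ and $\langle \xi, \nu_S(p)\rangle \geq 0$. Introduce $f: \overline M \to \R$ by $f(x) = u(x) - \langle x, \xi \rangle$; the first inequality says that $p$ minimizes $f$ on $\Gamma$. I would then split into two cases depending on whether $p$ also minimizes $f$ on the whole compact set $\overline M$.

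In \textbf{Case (i)}, $p$ is a minimum of $f$ on $\overline M$. The tangential gradient of $f$ along $\Gamma$ vanishes and the outward-normal derivative satisfies $\partial f/\partial \nu(p) \leq 0$, while \eqref{3.5} together with the free boundary identity $\nu = -\nu_S$ yields
\[
\frac{\partial f}{\partial \nu}(p) = -\langle \nu(p), \xi \rangle = \langle \nu_S(p), \xi\rangle \geq 0,
\]
forcing $\partial f/\partial \nu(p) = 0$, hence $\nabla f(p) = 0$ in all of $T_p M$, i.e.\ $\nabla u(p) = \xi^T$, the tangential projection of $\xi$ onto $T_p M$; set $q := p$. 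In \textbf{Case (ii)}, some $q \in \overline M$ satisfies $f(q) < f(p)$. Since $p$ minimizes $f|_\Gamma$, $q \notin \Gamma$; if $q \in \Sigma$, then $\partial f/\partial \nu(q) \leq 0$ and \eqref{3.4} give $1 \leq \langle \nu(q), \xi\rangle \leq |\xi| = \rho < 1$, a contradiction, so $q$ lies in the interior $M \setminus \partial M$ and $\nabla f(q) = 0$ again yields $\nabla u(q) = \xi^T$.

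In either case, set $y := \xi - \nabla u(q) = \xi^\perp \in T_q^\perp M$, so that $\Phi(q, y) = \nabla u(q) + y = \xi$ and $|\nabla u(q)|^2 + |y|^2 = |\xi|^2 = \rho^2$. To place $(q, y)$ in $\partial A_\rho$ it remains to verify $\nabla^2 u(q) - \langle \Pi_q, y\rangle \geq 0$. The Gauss formula gives $\nabla^2 \langle \cdot, \xi\rangle = \langle \Pi, \xi^\perp\rangle$ for the restriction of the ambient linear function to $M$, so $\nabla^2 f = \nabla^2 u - \langle \Pi, y\rangle$, and combined with the second-variation inequality $\nabla^2 f(q) \geq 0$ at the minimum (where $\nabla f(q) = 0$), this is exactly the required condition. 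The most delicate point is this second-order inequality in Case (i), where $q = p$ lies on $\Gamma$: one has to exploit that $\nabla f(p)$ vanishes in the \emph{full} tangent space (not merely along $\Gamma$), so that for every $v \in T_pM$ either $v$ or $-v$ points into $\overline M$ along a geodesic, and thereby promote the interior second-variation inequality to full positive semidefiniteness of $\nabla^2 f(p)$.
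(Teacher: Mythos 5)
Your proof is correct and follows essentially the same route as the paper: fix $\xi\in N_p^u\Gamma/\nu_S$, consider $f(x)=u(x)-\langle x,\xi\rangle$, split on whether the global minimum of $f$ over $\overline M$ is attained at $p\in\Gamma$ or at an interior point $q$, use the Neumann condition on $\Sigma$ to exclude $\Sigma$ and the free boundary condition together with $\langle\xi,\nu_S(p)\rangle\ge 0$ to force $\nabla f(p)=0$ in the boundary case, and then read off $\xi=\Phi(q,\xi^\perp)$ with $\nabla^2 u(q)-\langle\Pi_q,\xi^\perp\rangle\ge 0$ from the second-order minimality. You are slightly more explicit than the paper about two small points — the identity $\nabla^2\langle\cdot,\xi\rangle=\langle\Pi,\xi^\perp\rangle$ and the promotion of the second-order inequality to the full Hessian at the boundary point $p$ — but the argument is the same one.
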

 
 \begin{proof} For any $\xi\in 	N^u_p\Gamma/\nu_S$,  it holds by definition  that 
 	\begin{eqnarray}\label{eq3.6}
 	\langle x-p,\xi\rangle &\leq&   u(x)-u(p), \forall x\in\Gamma,\\
     \langle \xi,\nu_S(p)\rangle &\geq & 0. \label{eq3.7}
 	\end{eqnarray}	
 	Again we consider a function defined
 	\begin{eqnarray*}
 	f:\quad \overline M &\to& \mathbb{R}\\ 
 	x &\mapsto & u(x)-\langle \xi,x\rangle.
 	\end{eqnarray*}
 	First, notice  that this function satisfies 
 	\begin{align}\label{normal derivative on Sigma}
 	\frac {\p f}{\p \nu}(x) =\frac {\p u}{\p \nu}(x)
 	-\langle \xi,\nu(x)\rangle. 
 	\end{align}
 	Hence we have $\frac {\p f}{\p \nu}(x)=1-\langle \xi,\nu(x)\rangle >0$ for any $x\in \Sigma$, for $|\xi|=\rho<1$.
 	This means that the function $f$ can not achieve its minimum on $\Sigma$. \eqref{eq3.6} means that $p$ is a minimum point of
 	$f|_{_{\Gamma}}:\Gamma\to \R$.
 Hence we have only two cases:
 	either $f: M\to \R$  achieves its minimum  at $p$,  or, at another point  $q\in M$. 
 	
 	In the latter case 
 	$q $ is an interior minimum point of $f$. Thus we have
 		\begin{eqnarray} \label{eq3.9}
 		0&= &\nabla f(q)=\nabla u(q)-\xi^T,\\
 		\label{eq3.10}
 		0& \leq &\nabla^2 f(q)=\nabla^2u(q)-\langle \xi^\perp, \Pi_{q}\rangle,
 		\end{eqnarray} 
 		where $\xi^\perp$ is the normal part of $\xi$ in $T^\perp M$. 
 			(We remark that here is one of the differences between the higher codimensional case
 				and the $0$ codimension case.)   Set $\xi^T:=\xi-\xi^\perp \in TM$.
 		It implies 
 		\begin{align*}
 		\xi=\xi^T+\xi^\perp= \nabla u(q)+\xi^\perp=\Phi(q,y_0),
 		\end{align*}for  $y_0:=\xi^\perp\in T^\perp_q M$. 
 		In particular, it holds that
 		\begin{align*}
 		|\nabla u(q)|^2+|y_0|^2=|\xi|^2=\rho^2<1.
 		\end{align*}
 		That is,  $\Phi(q,y_0)=\xi$ and $(q,y_0)\in \p A_\rho$.
 		
 	Now we consider the first case, i.e., $p$ is a minimum point of $f$ in $M$. Due to the Neumann boundary condition \eqref{3.5} and \eqref{eq3.7},
 	we have 
 	$$ \frac {\p f}{\p \nu}(p) =\frac {\p u}{\p \nu}(p)
 	-\langle \xi,\nu(p)\rangle= -\langle \xi,\nu(p)\rangle=  \langle \xi,\nu_S(p)\> \ge 0.$$
 	This implies, together the minimality of $p$, that $\n f(p)=0$. From this, one can show that $\n^2 f (p) \ge 0$, though $p$ is a boundary point.
 	Both together mean that equation \eqref{eq3.9} and  \eqref{eq3.10} hold at $p\in \Gamma$. Then the same argument given above implies that
 	$\xi=\n u (p)+\xi^\perp \in \Phi(\p A_\rho)$. 
 \end{proof}

 Now we are ready to prove one of our main results. 
 
 \begin{proof}[\textbf{Proof of Theorem \ref{thm1.2}}] 
 We only need to consider the case $m\ge 2$, since the case $m=1$ can be viewed as  the case $m=2$, by embedding $\R^{n+1}$ into $\R^{n+2}.$
 We first assume that Assumption \eqref{orthogonal 2} holds.
In this case, we define $u$ to be a solution of problem \eqref{3.3}-\eqref{3.5}. 
 From the above discussions, we know that Proposition \ref{slice area estimate} holds. Namely, we have
 \begin{align*}
 |\Phi( \p A_\rho)|\geq \frac{1}{2}| \mathbb{S}^{N-1}(\rho)|, \quad\forall \rho\in (0,1).
 	\end{align*}
 	It  yields that  	
 	\begin{align}\label{isoeq final 2}
 	\int_{ \Phi(\cup_{\rho\in(t, 1)}\p A_\rho)} 1d\xi
 	&=\int_t^1 |\Phi(\p A_\rho)|d\rho
 	\\&\geq \int_t^1 \frac 12  |\mathbb{S}^{N-1}|{\rho^{N-1}}d\rho =\frac{|\mathbb{B}^{N}|}{2}(1-t^N). \nonumber
 	\end{align}
 	Now we use a trick of Brendle \cite{Brendle2019}.
 Recall that $\Phi(x,y)=\nabla u(x)+y$ and $|\Phi|^2=|\nabla u|^2+|y|^2$. 
 For  any $0<t<1$, we have
 \begin{equation}\label{isoeq final 1}
 \begin{split}
 	\int_{ \Phi(\cup_{\rho\in(t, 1)}\p A_\rho)}
 	1d\xi
 &= \int_{U}\left(\int_{\{y\in T_x^\perp M: t^2<|\nabla u|^2(x)+|y|^2<1\}} \det (\text{Jac}(\Phi)) \cdot  \chi_A(x,y)dy\right)dv_x
 \\&\leq  \int_{U}\left(\int_{\{y\in T_x^\perp M: t^2<|\nabla  u|^2(x)+|y|^2<1\}} 1dy\right)dv_x
 \\&=|\mathbb{B}^{m}|\int_U\left[(1-|\nabla  u|^2(x))^{\frac{m}{2}}-(t^2-|\nabla  u|^2(x))_+^{\frac{m}{2}}\right]dv_x
 \\&\leq \frac{m}{2} (1-t^2) |\mathbb{B}^{m}| \cdot |M|, 
 \end{split}
 \end{equation}
 	where $\chi_A$ is the characteristic function of $A$, and we have used  $m\geq 2$ in the last inequality.
Combining \eqref{isoeq final 1}, \eqref{isoeq final 2} and dividing $(1-t)$ and letting $t\to 1^-$, we obtain
 	\begin{align*}
 	|M|\geq\frac 12 \frac{N|\mathbb{B}^{N}|}{m|\mathbb{B}^{m}|}.
 	\end{align*}
 	Hence, we have 
 	\begin{align*}
|\Sigma|+\int_M |H|dv=n|M|&=n {|M|^{\frac{n-1}{n}} } \cdot |M|^{\frac{1}{n}}
 	\\&\geq n|M|^{\frac{n-1}{n}}\cdot \left(\frac{N|\mathbb{B}^{N}|}{2m|\mathbb{B}^{m}|}\right)^{\frac{1}{n}},
 	\end{align*}
 	which yields that	\begin{align*}
 	\frac{| \Sigma|+\int_M |H| dv}{ |\partial \mathbb{B}^n| }\geq   \left(\frac{N|\mathbb{B}^{N}|}{2m |\mathbb{B}^n||\mathbb{B}^{m}|}\right)^{\frac{1}{n}} \left(\frac{|M|}{ {|\mathbb{B}^n|} }\right)^{\frac{n-1}{n}}.
 	\end{align*} 	
 One can check easily that when $m=2$, $ {(n+2)|\mathbb{B}^{n+2}|}=2{ |\mathbb{B}^n||\mathbb{B}^2|}$ holds. Hence we have proved the \riq for $m\ge 2$.

 	If 
 	$\S$ and $\Gamma$ are not connected, i.e., their common boundary is an empty set. Then the above proof works without any change.
 	
 	If $\S$ and $\Gamma$ are connected and $M$ does not satisfy Assumption \eqref{orthogonal 2}, one can  use  an
 	approximation argument to construct  domains $M_\varepsilon$ in $M$ with relative boundary  $\Sigma_\varepsilon$ and free boundary $\Gamma_\varepsilon$ on $S=\p \C$  satisfying  \eqref{orthogonal 2} such that 
 	the area and the volume of $\Sigma_\varepsilon$ and $M_\varepsilon$ resp. are  close to the area and the volume of $\Sigma$ and $M$ respectively as small as we want. 
 	Then we obtain \eqref{riq} for $M_\varepsilon$ as above. By taking  $\varepsilon\to 0$, we   obtain \eqref{riq} for a general $M$.
 	
 	Now we consider the case that  $m=2$ and  equality holds.
 	Assume  that $M^n$ is a compact free boundary submanifold in $\mathbb{R}^{n+2}$ such that
 		\begin{equation}\label{riq equality case}
 	\frac{|\S|+\int_M|H|dv}{|\partial \mathbb{B}^n|}
 	=\left(  \frac{(n+2)|\mathbb{B}^{n+2}|}{4 |\mathbb{B}^n||\mathbb{B}^2|}\right)^{\frac{1}{n}} 
 	\left(\frac{\mbox{}|M|}{\mbox{}|\mathbb{B}^n|}\right)^{\frac{n-1}{n}}=\left(\frac{1}{2}\right)^{\frac{1}{n}}\left(\frac{\mbox{}|M|}{\mbox{}|
 	\mathbb{B}^n|}\right)^{\frac{n-1}{n}},
 	\end{equation}
where we have used that $ {(n+2)|\mathbb{B}^{n+2}|}=2{ |\mathbb{B}^n||\mathbb{B}^2|}$ in the last equality. 
Such a submanifold $M$ is called a \textit{relative isoperimetric region}, which is by the \riq \eqref{riq}
a stationary point of functional \eqref{functional} below.
It can be proved that Assumption \eqref{orthogonal 2} holds for $M$, namely the relative boundary $\S$ interests $S$ orthogonally. See Proposition \ref{variational character} in the Appendix.
Hence we can follow the above argument to define a function $u$ and obtain 
the isoperimetric inequality. Since for $M$ we have equality, all inequalities in the above proof are equalities.
In particular, we conclude that $\det(\text{Jac}\Phi) \cdot \chi_A=1$ a.e. in $M$ and $|U|=|M|$. From Lemma \ref{lem3.1}, we obtain that 
$\nabla^2 u(x)-\langle \Pi_x,y\rangle =g_x$ for a.e. in $\Omega$. Since $u\in C^2(M)$, we have 
$\nabla^2 u(x)-\langle \Pi_x,y\rangle =g_x$ for all $(x,y)\in\O$.  Since  $g_x$ and $y$ are independent, it follows that $\nabla^2 u(x)=g_x$ for all $x\in M$ and $\Pi\equiv 0$ on $M$, 
and  $M$ is contained in an $n$-dimensional flat space $P=\mathbb{R}^n$. It is clear that we are now in the case of codimension $0$
and the argument given at the end of  last section implies that $M$ is a flat half $n$-ball. Hence we complete the proof.
 \end{proof}
As a direct consequence, we have a boundary version of 
Michael-Simon and  Allard inequality  \cite{MS}, \cite{Allard}  with an optimal positive constant, at least in the case of codimension $m\le 2$. 
 
 \begin{thm}\label{coro2}
 	Let $M^n\subset \R^{n+m}$ $(m\ge 1)$ be a partially free  boundary   submanifold 
with relative boundary  $\Sigma$ and free boundary $\Gamma$ on a convex support hypersurface $S$.  For any non-negative smooth function $f:M \to \R$ vanishing on the relative boundary
$\Sigma$, we have
	\begin{equation}\label{riq10}
 	\frac{\int_M (|\n f| +|H|f)dv}{|\partial \mathbb{B}^n|}\geq 
 	\left(\frac 12 \right)^{\frac{1}{n}} b_{n,m}
 	\left(\frac{\mbox{}\int_M f^{\frac n{n-1}}dv}{\mbox{}|\mathbb{B}^n|}\right)^{\frac{n-1}{n}}.
 	\end{equation}
 	\end{thm}
 
 \begin{proof} 
  Since  the superlevel set $\{f \ge s\}$ is a partially free boundary submanifold, one  can 
   apply Theorem \ref{thm1.2} to $\{f \ge s\}$ and then follow
  completely the proof of Brendle in \cite{Brendle2019}.
 \end{proof}

 \section{Appendix} \label{sec5}

 \begin{prop}\label{variational character}
  Under the assumptions in Theorem \ref{thm1.2}, if $M$ achieves equality in the \riq \eqref{riq}, the relative boundary $\S$ and the free boundary $\Gamma$ of $M$ are connected, then
  Assumption \eqref{orthogonal 2} holds true.
  Moreover, its relative boundary $\S$ satisfies
  \[
   h- |H|_{|\S} =const.,
  \]
  where $h$ is the mean curvature of $\Sigma \subset M$.
 \end{prop}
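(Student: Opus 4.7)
I would argue via the first variation of the constrained relative isoperimetric functional. Because $M$ achieves equality in \eqref{riq}, after the scaling \eqref{relative comptiable condition} it is a critical point of
\[
\mathcal{F}(M')\;:=\;|\Sigma'|+\int_{M'}|H'|\,dv-\Lambda\,|M'|
\]
for some Lagrange multiplier $\Lambda\in\mathbb{R}$, as $M'$ varies through partially free boundary $n$-submanifolds of $\mathbb{R}^{n+m}$ with free boundary on $S$. Pick an ambient vector field $X\in C_c^\infty(\mathbb{R}^{n+m};\mathbb{R}^{n+m})$ with $X|_S\in TS$ (so that $\Gamma_t:=\phi_t(\Gamma)\subset S$ along the flow $\phi_t$), and write $M_t:=\phi_t(M)$. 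Let $\nu$ denote the outward unit conormal of $\Sigma$ in $M$ (so $\nu=-\nu_S$ on $\Gamma$ by the free boundary condition), $\mu$ the outward unit conormal of $\partial\Sigma$ in $\Sigma$, and $h:=\langle H_\Sigma,\nu\rangle$, where $H_\Sigma$ is the mean curvature vector of $\Sigma$ as a submanifold of $\mathbb{R}^{n+m}$; a direct orthogonal decomposition gives $H_\Sigma=h\,\nu+H|_\Sigma-II^M(\nu,\nu)$, and since $\nu\in TM$ is orthogonal both to $H$ and to $II^M(\nu,\nu)$, the scalar $h$ coincides with the mean curvature of $\Sigma$ as a hypersurface of $M$ in the direction $\nu$.

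The standard first variation formulas I will combine are
\[
\delta|M|(X)=-\int_M\langle H,X\rangle\,dv+\int_\Sigma\langle X,\nu\rangle\,ds,\qquad
\delta|\Sigma|(X)=-\int_\Sigma\langle H_\Sigma,X\rangle\,dA+\int_{\partial\Sigma}\langle X,\mu\rangle\,d\ell,
\]
and $\delta\!\int_M|H|\,dv(X)=(\text{bulk term})+\int_\Sigma|H|\langle X,\nu\rangle\,ds$, the last formula being obtained by realizing the variation as a displacement of $\Sigma$ inside a smooth extension $\tilde M\supset M$; the contribution of $\Gamma$ to $\delta|M|$ drops out because $\nu=-\nu_S\perp X$ there. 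To get the \emph{orthogonality}, I specialize $X$ to be supported in a shrinking tubular neighborhood of a point $p\in\partial\Sigma$ with $X|_{\partial\Sigma}$ an arbitrary section of $TS$. Letting the support shrink eliminates all but the $\partial\Sigma$-boundary term, forcing $\int_{\partial\Sigma}\langle X,\mu\rangle\,d\ell=0$ and hence $\mu\perp T_pS$ at every $p\in\partial\Sigma$; this is exactly the statement that $\Sigma$ meets $S$ orthogonally along $\partial\Sigma$, i.e.\ Assumption \eqref{orthogonal 2}.

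For the \emph{Euler--Lagrange equation on $\Sigma$}, take $X=\varphi\,\nu$ with $\varphi\in C_c^\infty(\Sigma\setminus\partial\Sigma)$, extended off $\Sigma$ as a tangent vector field to $\tilde M$. Since $\nu\in TM$, the term $\langle H,X\rangle$ vanishes identically on $M$ and so does the bulk term in $\delta\!\int_M|H|\,dv$; the $\partial\Sigma$-term in $\delta|\Sigma|$ also vanishes by support. Assembling the pieces,
\[
0=\delta\mathcal{F}(X)=\int_\Sigma\bigl(-h+|H|_{|\Sigma}-\Lambda\bigr)\varphi\,ds\qquad\text{for every }\varphi,
\]
and the fundamental lemma of the calculus of variations yields $h-|H|_{|\Sigma}=-\Lambda$, constant on $\Sigma$. \emph{The main technical difficulty} is computing the $\Sigma$-boundary contribution of $\delta\!\int_M|H|\,dv$ and handling the non-smoothness of $H\mapsto|H|$ where $H=0$; the former is resolved by the extension trick above (moving $\Sigma$ within $\tilde M$ while leaving the mean curvature field unchanged on the overlap), while the latter is dealt with by replacing $|H|$ with $\sqrt{|H|^2+\epsilon}$ and passing $\epsilon\downarrow0$, or equivalently by restricting to the open set $\{H\ne 0\}$ and extending by continuity.
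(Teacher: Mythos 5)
Your proof takes the same route as the paper: a first‑variation argument for the constrained relative isoperimetric problem, with the two conclusions read off from, respectively, the interior term on $\Sigma$ and the boundary term on $\partial\Sigma$. The paper phrases it as stationarity of the quotient $J(M_t)=\big(|\Sigma_t|+\int_{M_t}|\tilde H|\,dv\big)/|M_t|^{(n-1)/n}$ rather than a Lagrange functional; your $\Lambda$ is simply $\tfrac{n-1}{n|M|}\big(|\Sigma|+\int_M|H|\,dv\big)$, so that is cosmetic. Where the two writeups genuinely differ is the order: the paper first obtains $h-|H|_{|\Sigma}=\text{const}$ from normal variations $X=\varphi\nu$ supported away from $\partial\Sigma$ (so the $\partial\Sigma$ term vanishes by support, not by a limit), and only then does the first‑variation identity collapse to $0=\int_{\partial\Sigma}\langle X,\eta\rangle\,d\ell$ for every admissible $X$, which gives orthogonality immediately. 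You do it in the reverse order, deriving orthogonality first via a shrinking‑support argument; as written this is informal, since when the support of $X$ shrinks \emph{all} terms shrink, and one needs the scaling comparison $O(\epsilon^{n-2})$ for the $\partial\Sigma$‑integral against $O(\epsilon^{n-1})$ for the $\Sigma$‑integrals (with a normalization $X(p)=v$ fixed) to isolate the boundary term. That can be made rigorous, but the paper's order avoids it entirely. A second rough edge: you alternate between ambient flows $\phi_t$ of $\mathbb{R}^{n+m}$ and displacements of $\Sigma$ inside a fixed extension $\tilde M$. The paper commits to the latter throughout, in which case $|H|=|\tilde H|$ is a \emph{fixed} smooth function on the \emph{fixed} manifold $\tilde M$ and $\int_{M_t}|H|\,dv$ is just the integral of a fixed density over a moving domain; its first variation is the pure boundary term $\int_\Sigma|H|\langle X,\nu\rangle\,ds$ with no bulk term and no differentiability issue at $\{H=0\}$. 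Your proposed regularization $\sqrt{|H|^2+\epsilon}$ is therefore unnecessary once you adopt the intrinsic variation — it only arises if you insist on the ambient‑flow computation, which in any case contributes a bulk term that vanishes when $X$ is tangent to $\tilde M$.
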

 \begin{proof} First of all, we extend $M$ smoothly to  a partially free boundary submanifold  $\tilde M$  in $\R^N$ with free boundary $\tilde \Gamma$ on $S$
 and denote $\tilde H$ be the mean curvature vector field of $\tilde M$ in $\R^N$.
 Recall that $\Sigma$ and $\Gamma$ are the relative boundary and free boundary of $M$ resp. with a non-empty common boundary denoted by $\p \S$.
 Then we consider a variation of $\Sigma$ in $\tilde M$, namely, $F:(-\e, \e)\times \Sigma \to \tilde M$ such that 
 $F_t:\Sigma \to \tilde M$, ($t\in (-\e, \e)$), defined by $F_t(x)=F(t,x)$ is an immersion with $F_t(\p\S)\subset \tilde \Gamma$ and $F_0=id.$
 Let $M_t$ be the domain in $\tilde M$ enclosed by $F_t(\S)$ and $\tilde \Gamma$. 
 It is clear that $M_t$ is a partially free boundary submanifold with relative boundary $\S_t:=F_t(\p \S)$ and free boundary on
 the support hypersurface $S$. 
 Define a functional by 
   \begin{align}\label{functional}
 	J(M_t):= \frac{|\Sigma_t|+\int_{M_t} |\tilde H| dv}{ |M_t|^{\frac{n-1}{n}}},
 	\end{align}
 	where $\tilde H$ is the mean curvature vector field of $M_t$ in $\R^N$. Note that $\tilde H$ is just the restriction of 
 	the mean curvature of $\tilde M$ in $\R^N$. By Theorem \ref{thm1.2} $M$ satisfies the \riqq. Hence $J$ achieves
 	its minimum at $t=0$, which implies $\frac{ d }{dt}\big|_{t=0} J(M_t)=0.$
 Recall that $\nu$ is the outer unit normal of $\Sigma$ in $M$ and denote
 $\eta$ be the unit outer conormal of $\p \S$ in $\S$. A direct computation gives 
 \begin{align*}
 	0&
 	=\frac{d}{dt}\Big|_{t=0} J(M_t) \cdot |M|^{\frac {n-1}n}
 	\\&=  \int_{\partial \Sigma} \langle X ,\eta\rangle ds - \int_\Sigma h \langle X, \nu \rangle  
 	+\int_\Sigma |H|_{|_\Sigma}\langle X ,\nu\rangle 
 	-\frac{n-1}{n|M|}\left(|\Sigma|+\int_M |H| dv\right)\cdot \int_\Sigma \langle X,\nu\rangle,
 	\end{align*}
 	where $X$ is the variation vector of $F$ defined by
 	\[X(x)=\frac{\p F}{\p t}\Big|_{t=0} (x), \quad\forall x\in\Sigma.\]
 	From this formula we first get  $h-|H|_{|\S} =const.,$ by considering normal variations $X=\phi\nu$ with support not touching $\p \S$.
 	It follows that $0= \int_{\partial \Sigma} \langle X ,\eta\rangle ds$. Now by considering the variations keeping the property that
 	$F(\p \S)\subset \tilde \Gamma$, we have that  $\eta$ is orthogonal to $\Gamma$, Assumption \eqref{orthogonal 2}. Hence we have completed the proof.
 \end{proof}

\noindent {\it \textbf{Acknowledgement}.}
This work was supported by SPP 2026 of DFG ``Geometry at infinity'' and NSFC (Grant No. 12101255, 12201003). We would like to thank the referee for careful  reading and valuable suggestions to improve the context of the paper. 
\

\end{document}